\newcommand{\orbit}{\text{Orb}}
\newcommand{\equivalencesbd}{\thicksim}
\newcommand{\equivalencepobd}{\thickapprox}
\newcommand{\bratleq}{\preccurlyeq}
\newcommand{\jumpofr}{\Delta_{\mathbb{R}}^+}
\newcommand{\tc}{\cong_{tc}}
\newcommand{\ntc}{\ncong_{tc}}
\newcommand{\ptc}{\cong_{tc}^*}
\newcommand{\nptc}{\ncong_{tc}^*}
\newcommand{\cantorspace}{2^{\mathbb{N}}}
\newcommand{\powersetz}{\mathcal{P}(\mathbb{Z})}
\newcommand{\natnump}{\mathbb{N}^+}
\newcommand{\cantorminimalspace}{\mathcal{M}_{\cantorspace}}
\newcommand{\A}{\mathcal{A}}
\newcommand{\Freq}{\Dens}
\newcommand{\Dens}{\text{Dens}}
\newcommand{\St}{S}
\newcommand{\setshift}{\xi}
\newcommand{\seqshift}{\sigma}
\newtheorem{theorem}{Theorem}[section]
\newtheorem{lemma}[theorem]{Lemma}
\newtheorem{corollary}[theorem]{Corollary}
\newtheorem{proposition}{Proposition}
\newtheorem{fact}[theorem]{Fact}
\newtheorem{question}{Open question}
\begin{document}
\title{The complexity of topological conjugacy of pointed Cantor minimal systems}
\author{Burak Kaya}
\address{Department of Mathematics, Rutgers University\\
110 Frelinghuysen Road, Hill Center,
Piscataway, NJ 08854, USA\\
}
\email{bkaya@scarletmail.rutgers.edu}
\keywords{Borel complexity, topological conjugacy, Cantor minimal systems, Bratteli diagrams}
\subjclass[2010]{Primary 03E15, Secondary 37B05}
\begin{abstract} In this paper, we analyze the complexity of topological conjugacy of pointed Cantor minimal systems from the point of view of descriptive set theory. We prove that the topological conjugacy relation on pointed Cantor minimal systems is Borel bireducible with the Borel equivalence relation $\jumpofr$ on $\mathbb{R}^{\mathbb{N}}$ defined by $x \jumpofr y \Leftrightarrow \{x_i:i \in \mathbb{N}\}=\{y_i:i \in \mathbb{N}\}$. Moreover, we show that $\jumpofr$ is a lower bound for the Borel complexity of topological conjugacy of Cantor minimal systems. Finally, we interpret our results in terms of properly ordered Bratteli diagrams and discuss some applications.
\end{abstract}
\maketitle

\section{Introduction}

Over the last two decades, the study of the relative complexity of classification problems has been a major focus in descriptive set theory. Under appropriate coding, the study of many classification problems can be seen as the study of the corresponding definable equivalence relations on Polish spaces. A framework has been developed and applied to many classification problems from various areas of mathematics over the years. For a detailed development of this framework, we refer the reader to \cite{Gao09}.

Topological dynamics has been one of the subjects of this study. More specifically, the topological conjugacy relations on various restricted classes of subshifts have been extensively studied. Recall that a \textit{subshift} is a closed shift invariant subset of the compact space of bi-infinite sequences over a finite alphabet together with the left shift map. Clemens \cite{Clemens09} showed that the topological conjugacy relation on subshifts over a finite alphabet is a universal countable Borel equivalence relation. Gao, Jackson, and Seward \cite{GaoJacksonSeward15} partially analyzed the topological conjugacy relation on minimal subshifts and proved that this relation is not \textit{smooth}, i.e. it is strictly more complex than the equality relation $\Delta_{\mathbb{R}}$. Thomas \cite{Thomas13} presented an elementary proof of this fact by showing that topological conjugacy of Toeplitz subshifts is not smooth. More recently, Sabok and Tsankov \cite{SabokTsankov15} have obtained results on the Borel complexity of topological conjugacy of certain subclasses of Toeplitz subshifts.

In this paper, we extend this study from minimal subshifts to arbitrary Cantor minimal systems and analyze the topological conjugacy relation on Cantor minimal systems. As far as the author knows, the Borel complexity of this relation has not been previously studied in this generality. We provide the following lower bound.

\begin{theorem}\label{theorem-mainresultunpointedcantor} $\jumpofr$ is Borel reducible to the topological conjugacy relation on Cantor minimal systems.
\end{theorem}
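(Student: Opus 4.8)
The plan is to build a Borel map $x \mapsto M_x$ from $\mathbb{R}^{\mathbb{N}}$ into the space of Cantor minimal systems so that $M_x \tc M_y$ holds exactly when $\{x_i : i \in \mathbb{N}\} = \{y_i : i \in \mathbb{N}\}$. Since $\jumpofr$ depends only on the range of the input sequence, I would first precompose with a Borel injection acting coordinatewise, using it to replace each real by a ``sufficiently independent'' parameter (say, landing in a perfect algebraically independent set, or simply in $(0,\infty)$) without changing the relation: for any injective coordinate map $g$ one has $\{x_i\}=\{y_i\}\Leftrightarrow\{g(x_i)\}=\{g(y_i)\}$. The real content is then to turn a countable \emph{set} of parameters into a single minimal system whose conjugacy class remembers that set and nothing more.

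The key design principle is to encode the set through an invariant of Cantor minimal systems that is genuinely \emph{set-valued and non-merging}. The natural candidate is the family of ergodic invariant measures: a topological conjugacy $h \colon (X,T)\to(Y,S)$ induces an affine homeomorphism of the simplices of invariant probability measures carrying ergodic measures bijectively to ergodic measures, so this family is preserved as a set, and distinct ergodic measures remain distinct and cannot be amalgamated. To each ergodic measure $\mu$ I would attach a conjugacy-invariant real read off from the measured system $(X,T,\mu)$ --- for instance its measure-theoretic entropy, or the range $\{\mu(U):U \text{ clopen}\}\subseteq[0,1]$ of $\mu$ on the clopen algebra, both of which are transported by $h_*$. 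If the system is built so that this reading is injective across the relevant measures and takes the prescribed values, the \emph{set} of labels attached to the ergodic measures becomes a faithful record of $\{x_i\}$.

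Concretely, I would realize $M_x$ as a properly ordered Bratteli--Vershik system. One assembles a single simple --- hence minimal --- diagram out of one ``block'' per coordinate $x_i$, arranged along a common spine that forces simplicity and proper orderability, while the branching is designed so that the extreme traces of the resulting dimension group, equivalently the ergodic measures of $M_x$, are in canonical bijection with the range set $\{x_i\}$, the block for $x_i$ contributing an ergodic measure whose attached label is $x_i$. The assignment $x \mapsto M_x$ is then Borel, being given level-by-level by incidence matrices that depend Borel-measurably on the coordinates. For the forward implication I would verify that permuting or repeating the coordinates of the input sequence only permutes or duplicates blocks, leaving the conjugacy class of the Bratteli--Vershik system unchanged (up to telescoping), so that $\{x_i\}=\{y_i\}$ forces $M_x \tc M_y$. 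For the converse, a conjugacy transports the ergodic measures of $M_x$ to those of $M_y$ while preserving their labels, whence $\{x_i\}=\{y_i\}$.

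The main obstacle is precisely the amalgamation step: I must keep the diagram \emph{simple and properly ordered}, so that $M_x$ really is a Cantor minimal system, while simultaneously keeping the blocks \emph{rigid}, so that no conjugacy can split, merge, or relabel them and the range set --- neither more nor less --- is recovered. This is where the Bratteli diagram and Giordano--Putnam--Skau machinery is essential: controlling the extreme traces of the dimension group through the incidence matrices, realizing the prescribed labels exactly and injectively (which, if entropy is used, relies on realization results for minimal systems with prescribed invariant measures), and checking invariance of the conjugacy class under reordering and repetition of the blocks. Once these are established, the two implications above yield the desired Borel reduction $\jumpofr \leq_B \cong_{tc}$.
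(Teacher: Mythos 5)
Your overall architecture is viable in principle and is in fact a close cousin of the paper's: the paper encodes the countable set $S$ into the clopen values set of the (unique, Lebesgue-induced) invariant measure of a Sturmian-type system built from an irrational rotation --- exactly your ``range of $\mu$ on the clopen algebra'' label --- but with a \emph{single} measure carrying the whole set at once, rather than one ergodic measure per element. As written, though, your argument has two genuine gaps, both sitting exactly where you flag ``the main obstacle.'' The first is the forward implication. With one block per coordinate, $M_x$ depends on the \emph{enumeration} $x$, not just on $\{x_i : i \in \mathbb{N}\}$: repeated values produce repeated blocks, and different enumerations interleave the blocks differently. You assert this ``only permutes or duplicates blocks, leaving the conjugacy class unchanged up to telescoping,'' but that is precisely what must be proved, and for a generic block construction it fails: two copies of the same block can contribute two distinct ergodic measures carrying the same label, which changes the Choquet simplex of invariant measures and hence the conjugacy class. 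You would have to design the blocks so that duplicates and re-orderings provably telescope away. The paper sidesteps this entirely because its invariant object, the Boolean algebra generated by $\{T_{\gamma}^k[[0,\alpha)] : k \in \mathbb{Z},\ \alpha \in S\}$, manifestly depends only on the set $S$.

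The second gap is the realization and injectivity step, which is the entire content of the theorem and is only asserted. If the label is entropy, you must realize an arbitrary prescribed countable set of reals as the entropies of the ergodic measures of a single Cantor \emph{minimal} system, Borel-uniformly in the input; this is a hard realization problem and not a corollary of the Effros--Handelman--Shen machinery, which controls dimension groups and traces but not entropy. If instead the label is the clopen values set $\{\mu_i(U) : U \text{ clopen}\}$, note that clopen sets are global, so each $\mu_i$ picks up values contributed by all the other blocks and by the spine; extracting $x_i$ from that set requires an independence argument (the paper uses $\mathbb{Q}$-linear independence of $\mathcal{I} \cup \{1,\gamma\}$ for exactly this purpose) that you do not supply. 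Finally, you must rule out extra ergodic measures: simplicity forces the blocks to communicate, while supporting many ergodic measures requires that communication to be asymptotically negligible, and balancing the two is the delicate part of any such construction. The backward implication (a conjugacy preserves the labeled set of ergodic measures) is fine, but without these items you do not yet have a reduction.
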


Theorem \ref{theorem-mainresultunpointedcantor} will be obtained as a byproduct of our analysis of topological conjugacy of \textit{pointed} Cantor minimal systems, which is the main focus of this paper. Using Stone duality, we shall show that the set of countable atomless Boolean subalgebras of $\powersetz$ which are closed under the map $A \mapsto A-1$ and whose non-empty elements are syndetic sets is a complete set of invariants for topological conjugacy of pointed Cantor minimal systems. This will enable us to prove the main result of this paper.

\begin{theorem}\label{theorem-mainresultpointedcantor} $\jumpofr$ is Borel bireducible with the topological conjugacy relation on pointed Cantor minimal systems.
\end{theorem}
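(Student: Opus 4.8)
The plan is to prove the two Borel reductions $\ptc\le_B\jumpofr$ and $\jumpofr\le_B\ptc$ separately, exploiting the complete invariant isolated above. To a pointed Cantor minimal system $(X,\phi,x)$ I associate the shift-invariant subalgebra
\[ \mathcal{I}(X,\phi,x)=\bigl\{\,\{n\in\mathbb{Z}:\phi^{n}(x)\in U\}:U\subseteq X\text{ clopen}\,\bigr\}\subseteq\powersetz, \]
which is countable, atomless and has syndetic nonempty members (syndeticity being exactly the reflection of minimality), so that by the invariant theorem two pointed systems are topologically conjugate if and only if they determine the \emph{same} subalgebra of $\powersetz$. The whole argument thus reduces to comparing equality of such subalgebras with the same-range relation $\jumpofr$.

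For the upper bound $\ptc\le_B\jumpofr$, I would fix a Borel injection $\iota\colon\powersetz\to\mathbb{R}$, which exists since $\powersetz=2^{\mathbb{Z}}$ is standard Borel, and produce a Borel map sending a code for $(X,\phi,x)$ to some enumeration $(A_i)_{i\in\mathbb{N}}$ of $\mathcal{I}(X,\phi,x)$, then output $(\iota(A_i))_{i\in\mathbb{N}}\in\mathbb{R}^{\mathbb{N}}$. Since $\iota$ is injective, the range of this output is $\iota$ applied to $\mathcal{I}(X,\phi,x)$, so two systems have $\jumpofr$-related images precisely when their invariant algebras coincide, i.e. precisely when they are conjugate. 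The only point to verify is that the clopen algebra and the orbit map $n\mapsto\phi^{n}(x)$ can be read off Borel-measurably from the chosen standard Borel parameterization of pointed Cantor minimal systems; this is routine bookkeeping once the coding is fixed.

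The substantial direction is the lower bound $\jumpofr\le_B\ptc$, where I must realize arbitrary countable subsets of $\mathbb{R}$ faithfully as invariant algebras. The plan is to build a Borel family $r\mapsto\mathcal{C}_r$ of shift-invariant, atomless subalgebras of $\powersetz$ with syndetic nonempty members --- concretely via Toeplitz-type patterns placed on pairwise disjoint periodic scales, one real per scale --- arranged to be \emph{independent}, in the sense that from the algebra generated by any subfamily $\bigcup_{r\in \St}\mathcal{C}_r$ one can recover exactly the index set $\St$. Granting such gadgets, I would send a sequence $(x_i)_{i}\in\mathbb{R}^{\mathbb{N}}$ to a pointed system realizing, via Stone duality, the subalgebra generated by $\bigcup_{i}\mathcal{C}_{x_i}$. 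Then $(x_i)\jumpofr(y_j)$ iff the two sequences have the same range iff $\bigcup_i\mathcal{C}_{x_i}$ and $\bigcup_j\mathcal{C}_{y_j}$ generate the same algebra iff the associated systems are conjugate.

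I expect the main obstacle to be simultaneously maintaining minimality and guaranteeing faithful recovery. Keeping \emph{every} nonempty element of the generated algebra syndetic --- equivalently, keeping the combined system minimal --- is delicate, because the Boolean and shift closure of an \emph{infinite} union of gadgets must not produce a non-syndetic nonempty set; this is where the disjoint-scale placement has to be chosen with care and where I anticipate the hard estimates. At the same time the scales must be rigid enough that no element of $\mathcal{C}_{r'}$ with $r'\notin \St$ can be manufactured inside $\langle\bigcup_{r\in \St}\mathcal{C}_r\rangle$, so that $\St$ is genuinely an invariant of the generated algebra. Once the combined object is verified to be a legitimate invariant, forgetting the distinguished point turns the same construction into a Borel reduction witnessing Theorem \ref{theorem-mainresultunpointedcantor}, so the lower bound for the unpointed relation indeed falls out as a byproduct.
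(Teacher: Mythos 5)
Your upper-bound direction ($\ptc \leq_B \jumpofr$) is essentially the paper's argument: the return times algebra is a complete invariant (Corollary \ref{corollary-returntimesinvariant}), and enumerating it against a fixed enumeration of the clopen algebra of $\cantorspace$ gives a Borel reduction to $\Delta_{2^{\mathbb{Z}}}^{+}$, which is bireducible with $\jumpofr$. That part is fine.

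The lower bound $\jumpofr \leq_B \ptc$ is where the substance lies, and your proposal leaves the decisive steps unproved. You correctly isolate what is needed: a Borel family $r \mapsto \mathcal{C}_r$ of gadgets such that (i) the Boolean-and-shift closure of $\bigcup_{r \in S}\mathcal{C}_r$ is still a return times algebra (countable, atomless, every nonempty element syndetic) for \emph{every} countable $S$, and (ii) $S$ is recoverable from the topological conjugacy class of the resulting system. But you give no construction achieving (i) and (ii); you explicitly defer ``the hard estimates,'' and those estimates \emph{are} the theorem. Moreover, the specific gadgets you suggest --- Toeplitz-type patterns on pairwise disjoint periodic scales, one real per scale --- face a structural obstacle for (ii): a single periodic scale is discrete data, and the invariants naturally extracted from periodic structure (periods, densities of periodic parts) are rational, so it is unclear what conjugacy invariant of the combined system would return an arbitrary real $r$. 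You would also need recovery to survive closing off an \emph{infinite} union of gadgets under the Boolean operations and the shift, and to be an invariant of the \emph{unpointed} system if Theorem \ref{theorem-mainresultunpointedcantor} is to fall out as claimed; neither is addressed.

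For comparison, the paper resolves exactly these points with a different device. Fix an irrational rotation $T_{\gamma}$ of $[0,1)$ and, for countable $S \subseteq (0,1)$, let $\A_S$ be the algebra of return-time sets of $0$ to the sets generated by $\{T_{\gamma}^{k}[[0,\alpha)] : k \in \mathbb{Z},\ \alpha \in S\}$. Minimality of the rotation yields syndeticity and atomlessness simultaneously; Weyl equidistribution gives $Ret_U$ asymptotic density $\mu(U)$; Lemma \ref{lemma-frequencydoesntchange} shows every point of the orbit closure of $ret_U$ has the same density, so $\Dens(\A_S)$ is an invariant of the unpointed conjugacy class; and taking $S$ inside a $\mathbb{Q}$-linearly independent Borel set $\mathcal{I}$ with $\mathcal{I}\cup\{1,\gamma\}$ independent makes $S$ recoverable from $\Dens(\A_S)$. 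Your plan needs an analogue of each of these four steps; as written it supplies none of them.
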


It is well-known that pointed Cantor minimal systems can be represented by properly ordered Bratteli diagrams. Consequently, it follows from Theorem \ref{theorem-mainresultunpointedcantor} that equivalence of properly ordered Bratteli diagrams is Borel bireducible with $\jumpofr$. As an application of this result, we prove that there exists no Borel way of attaching orders to simple Bratteli diagrams and obtaining properly ordered Bratteli diagrams, which is invariant under equivalence of Bratteli diagrams. More precisely, we prove the following.

\begin{theorem}\label{intromaintheorem2} Let $\mathcal{SBD}$ and $\mathcal{POBD}$ be the standard Borel spaces of simple Bratteli diagrams and properly ordered Bratteli diagrams respectively; and let $\sim$ and $\approx$ denote equivalence of unordered Bratteli diagrams and ordered Bratteli diagrams respectively. Then there exists no Borel map $f: \mathcal{SBD} \rightarrow \mathcal{POBD}$ such that for all $B,B' \in \mathcal{SBD}$,
\begin{itemize}
\item[-] $f(B) \equivalencesbd B$ as unordered Bratteli diagrams and
\item[-] $f(B) \equivalencepobd f(B')$ whenever $B \equivalencesbd B'$.
\end{itemize} 
\end{theorem}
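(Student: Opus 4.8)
The plan is to show that the existence of a Borel map $f$ as in the statement would force $\equivalencesbd$ to be Borel reducible to $\jumpofr$, and then to contradict this by proving that $\equivalencesbd$ is strictly more complex than $\jumpofr$.

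First I would observe that any $f$ satisfying the two hypotheses is automatically a Borel reduction from $\equivalencesbd$ to $\equivalencepobd$. The forward implication $B \equivalencesbd B' \Rightarrow f(B) \equivalencepobd f(B')$ is given outright. For the reverse implication, recall that $\equivalencepobd$ refines $\equivalencesbd$, in the sense that equivalent properly ordered Bratteli diagrams have equivalent underlying unordered diagrams; hence $f(B) \equivalencepobd f(B')$ yields $f(B) \equivalencesbd f(B')$ as unordered Bratteli diagrams, and combining this with the hypotheses $f(B) \equivalencesbd B$ and $f(B') \equivalencesbd B'$ gives $B \equivalencesbd B'$. Thus $f$ witnesses $\equivalencesbd \leq_B \equivalencepobd$. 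Since $\equivalencepobd$ is Borel bireducible with $\jumpofr$ — the consequence of our main result recorded above — we would obtain $\equivalencesbd \leq_B \jumpofr$.

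The heart of the argument, and the step I expect to be the main obstacle, is therefore to prove that $\equivalencesbd \not\leq_B \jumpofr$. Here I would pass through the Bratteli--Vershik machinery: by the Giordano--Putnam--Skau theorem, equivalence of simple unordered Bratteli diagrams corresponds, via the associated simple unital dimension groups, to strong orbit equivalence of the represented Cantor minimal systems, so that $\equivalencesbd$ is Borel bireducible with the isomorphism relation on countable simple unital dimension groups. I would then establish a lower bound by exhibiting a Borel reduction into this isomorphism relation from an equivalence relation that is provably not Borel reducible to $\jumpofr$, which equals the Friedman--Stanley jump $(=_{\mathbb{R}})^+$ of equality of reals. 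A natural candidate target is the next Friedman--Stanley jump $(=_{\mathbb{R}})^{++}$, since the jumps form a strictly increasing sequence and in particular $(=_{\mathbb{R}})^{++} \not\leq_B (=_{\mathbb{R}})^+ = \jumpofr$. To realize such a reduction I would use the Effros--Handelman--Shen characterization of dimension groups to Borel-assign, to each code for the relevant countable object, a countable simple unital dimension group whose isomorphism type recovers that object up to the intended equivalence, encoding the data into the trace simplex together with the image of the group in its affine representation.

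The delicate points, where I expect to spend the most effort, are twofold: (i) guaranteeing that the coded dimension groups are genuinely simple, so that the reduction lands inside $\mathcal{SBD}$ rather than among arbitrary Bratteli diagrams; and (ii) verifying that isomorphism of the coded dimension groups recovers exactly the intended equivalence and nothing coarser, so that the assignment is a reduction in both directions. Once $\equivalencesbd \not\leq_B \jumpofr$ is established, it contradicts the inequality $\equivalencesbd \leq_B \jumpofr$ derived above, and hence no Borel map $f$ with the stated properties can exist.
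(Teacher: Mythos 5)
Your first step is correct and coincides with the paper's: any $f$ satisfying the two bulleted conditions is automatically a Borel reduction from $\equivalencesbd$ to $\equivalencepobd$, since $\equivalencepobd$-equivalence of ordered diagrams implies $\equivalencesbd$-equivalence of the underlying unordered diagrams. The gap is in the second half. You reduce the whole theorem to the claim $\equivalencesbd \not\leq_B \jumpofr$, and you propose to prove this by Borel reducing $(=_{\mathbb{R}})^{++}$ into the isomorphism relation on countable simple unital dimension groups by ``encoding the data into the trace simplex together with the image of the group in its affine representation.'' This is a research plan rather than a proof: no reduction is exhibited, and the strategy is far from routine. Isomorphism of dimension groups determines the trace simplex only up to affine homeomorphism, and arranging that a countable set of countable sets of reals can be recovered from the pair (simplex, image of the group) so that isomorphism captures exactly $(=_{\mathbb{R}})^{++}$-equivalence --- while keeping the groups simple so that the Effros--Handelman--Shen realization lands in $\mathcal{SBD}$ --- would constitute essentially the entire content of the theorem. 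You flag points (i) and (ii) as delicate yourself, and they are left entirely open. (As a side remark, the correspondence you invoke between $\equivalencesbd$ and dimension group isomorphism is the Bratteli--Elliott correspondence; Giordano--Putnam--Skau concerns strong orbit equivalence of the systems, which is not needed here.)

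The paper closes this gap with an off-the-shelf chain of citations: Hjorth proved that isomorphism of countable torsion-free abelian groups is not Borel, Ellis reduced that relation to isomorphism of simple dimension groups, and Effros--Handelman--Shen reduce the latter to $\equivalencesbd$; hence $\equivalencesbd$ is not Borel. On the other hand, $\equivalencepobd$ \emph{is} Borel, because sending a properly ordered Bratteli diagram to the return times algebra of its Bratteli--Vershik system is a Borel reduction of $\equivalencepobd$ to $\Delta_{2^{\mathbb{Z}}}^+$, which is Borel. Since a relation Borel reducible to a Borel relation is itself Borel, no Borel reduction from $\equivalencesbd$ to $\equivalencepobd$ exists. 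If you wish to keep your detour through $\jumpofr$, the same citations suffice: non-Borelness of $\equivalencesbd$ already yields $\equivalencesbd \not\leq_B \jumpofr$ because $\jumpofr$ is Borel, so there is no need to manufacture a reduction from $(=_{\mathbb{R}})^{++}$.
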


This paper is organized as follows. In Section 2, we first recall some basic facts about minimal dynamical systems and give a brief overview of the correspondence between pointed Cantor minimal systems and properly ordered Bratteli diagrams. We then recall some results from the theory of Borel equivalence relations and briefly describe the construction of the standard Borel spaces of Cantor minimal systems and properly ordered Bratteli diagrams. In Section 3, we show that pointed Cantor minimal systems can be represented by certain Boolean subalgebras of $\powersetz$ and characterize pointed minimal subshifts in terms of the generating sets of their associated Boolean algebras. In Section 4, we prove the main results of this paper. In Section 5, using results of Hjorth, Ellis, and Effros, Handelman and Shen, we show that equivalence of simple Bratteli diagrams is strictly more complex than equivalence of properly ordered Bratteli diagrams, which immediately implies Theorem \ref{intromaintheorem2}. In Section 6, we interpret our results in terms of properly ordered Bratteli diagrams and discuss some open questions and further research directions.

\section{Preliminaries}
\subsection{Topological dynamical systems and minimality}

A \textit{topological dynamical system} is a pair $(X,\varphi)$ where $X$ is a compact metrizable topological space and $\varphi: X \rightarrow X$ is a homeomorphism. Two topological dynamical systems $(X,\varphi)$ and $(Y,\psi)$ are said to be \textit{topologically conjugate} if there exists a homeomorphism $\pi: X \rightarrow Y$ such that
\[\pi \circ \varphi=\psi \circ \pi\]
Similarly, we define the class of \textit{pointed topological dynamical systems} as the class of triples of the form $(X,\varphi,x)$ where $(X, \varphi)$ is a topological dynamical system and $x \in X$. Two pointed topological dynamical systems $(X, \varphi, x)$ and $(Y, \psi, y)$ are said to be \textit{(pointed) topologically conjugate} if there exists a topological conjugacy $\pi: X \rightarrow Y$ between $(X, \varphi)$ and $(Y, \psi)$ such that $\pi(x)=y$.

Given a topological dynamical system $(X,\varphi)$, a subset $Y \subseteq X$ is said to be $\varphi$-\textit{invariant} if $\varphi[Y] = Y$. The system $(X,\varphi)$ is said to be \textit{minimal} if $(X,\varphi)$ has no non-empty proper closed $\varphi$-invariant subsets. Given a point $x \in X$ and a subset $U \subseteq X$, the set of \textit{return times} of $x$ to the subset $U$ is the set
\[ Ret_U(X,\varphi,x):=\{i \in \mathbb{Z}: \varphi^i(x) \in U\}\]
The point $x \in X$ is said to be an \textit{almost periodic point} of $(X,\varphi)$ if for every open neighborhood $U$ of $x$, the set of return times $Ret_U(X,\varphi,x)$ is \textit{syndetic}, i.e. there exists an integer $k \geq 1$ such that
\[Ret_U(X,\varphi,x)\ \cap\  \{i,i+1,\dots,i+k\}\neq \emptyset\]
for all $i \in \mathbb{Z}$. Minimality has various equivalent characterizations in terms of almost periodic points. More precisely, we have the following theorem.
\begin{fact}\cite{Kurka03}\label{theorem-minimalityconditions} Let $(X,\varphi)$ be a topological dynamical system. Then the following are equivalent.
\begin{itemize}
\item[a.] $(X,\varphi)$ is minimal.
\item[b.] For all $x \in X$, the orbit $\orbit(x)=\{\varphi^i(x): i \in \mathbb{Z}\}$ is dense in $X$.
\item[c.] For all $x \in X$, the orbit $\orbit(x)$ is dense in $X$ and $x$ is an almost periodic point.
\item[d.] For some $x \in X$, the orbit $\orbit(x)$ is dense in $X$ and $x$ is an almost periodic point.
\end{itemize}
\end{fact}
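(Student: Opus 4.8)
The plan is to establish the implications $(a) \Leftrightarrow (b)$, then $(a) \Rightarrow (c)$, the trivial $(c) \Rightarrow (d)$, and finally $(d) \Rightarrow (a)$, which together yield all four equivalences. Throughout I fix a compatible metric $d$ on $X$ and freely use that $X$ is compact and non-empty, together with the fact that finitely many continuous maps on a compact metric space are uniformly continuous.

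First, $(a) \Leftrightarrow (b)$ is a routine orbit-closure argument. For any $x \in X$, the set $\orbit(x)$ is $\varphi$-invariant, and since $\varphi$ is a homeomorphism its closure $\overline{\orbit(x)}$ is a non-empty closed $\varphi$-invariant set. If $(X,\varphi)$ is minimal, then $\overline{\orbit(x)} = X$, giving $(b)$. Conversely, if every orbit is dense and $Y$ is a non-empty closed $\varphi$-invariant set, then picking $x \in Y$ gives $X = \overline{\orbit(x)} \subseteq Y$, so $Y = X$ and $(X,\varphi)$ is minimal.

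The implication $(a) \Rightarrow (c)$ is where the genuine content lies, namely extracting almost periodicity from minimality; density of all orbits is already $(b)$. Fix $x \in X$ and an open $U \ni x$. For each $y \in X$, minimality forces $\orbit(y)$ to meet $U$, so there is an integer $n_y$ with $\varphi^{n_y}(y) \in U$, and by continuity a neighborhood $N_y$ of $y$ with $\varphi^{n_y}[N_y] \subseteq U$. The sets $\{N_y : y \in X\}$ cover the compact space $X$, so finitely many $N_{y_1}, \dots, N_{y_r}$ suffice; put $K = \max_j |n_{y_j}|$. For any $i \in \mathbb{Z}$ the point $\varphi^i(x)$ lies in some $N_{y_j}$, whence $i + n_{y_j} \in Ret_U(X,\varphi,x)$; thus every integer is within distance $K$ of a return time, which is exactly syndeticity of $Ret_U(X,\varphi,x)$. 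Hence $x$ is almost periodic, and $(c)$ holds. The implication $(c) \Rightarrow (d)$ is immediate, since $X \neq \emptyset$.

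It remains to prove $(d) \Rightarrow (a)$, and the key step is that an almost periodic point generates a minimal orbit closure. Suppose $x$ is almost periodic with dense orbit, and let $M \subseteq X$ be any non-empty closed $\varphi$-invariant set; I aim to show $x \in M$, for then $\orbit(x) \subseteq M$ forces $X = \overline{\orbit(x)} \subseteq M$, so $M = X$, and as $M$ was arbitrary, $(X,\varphi)$ is minimal. Given $\varepsilon > 0$, almost periodicity provides $k \geq 1$ so that $Ret_{B(x,\varepsilon)}(X,\varphi,x)$ meets every window $\{i,\dots,i+k\}$; using uniform continuity of $\varphi^{0}, \dots, \varphi^{k}$, choose $\delta > 0$ with $d(a,b) < \delta \Rightarrow d(\varphi^j(a),\varphi^j(b)) < \varepsilon$ for all $0 \leq j \leq k$. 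Picking $z \in M$ and $m$ with $d(\varphi^m(x), z) < \delta$, the window $\{m, \dots, m+k\}$ contains a return time $m + j$, so $d(\varphi^{m+j}(x), x) < \varepsilon$ and hence $d(\varphi^j(z), x) \leq d(\varphi^j(z), \varphi^{m+j}(x)) + d(\varphi^{m+j}(x), x) < 2\varepsilon$; since $\varphi^j(z) \in M$, this shows $M$ meets $B(x, 2\varepsilon)$. Letting $\varepsilon \to 0$ and using that $M$ is closed yields $x \in M$, completing the argument. The main obstacle throughout is precisely this interface between the combinatorial syndeticity condition and the topological density and invariance conditions, which in both directions is bridged by a compactness argument.
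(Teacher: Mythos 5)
The paper states this result as a Fact cited to K\r{u}rka and gives no proof of its own, so there is nothing internal to compare against; your argument is the standard one and it is correct. The cycle $(a)\Leftrightarrow(b)$, $(a)\Rightarrow(c)\Rightarrow(d)\Rightarrow(a)$ is complete, with the two compactness arguments (finite subcover to get a uniform bound on return-time indices for $(a)\Rightarrow(c)$, and uniform continuity of $\varphi^0,\dots,\varphi^k$ for $(d)\Rightarrow(a)$) carried out correctly. The only cosmetic point is that ``every integer lies within $K$ of a return time'' yields the paper's formulation of syndeticity with window length $2K$ rather than $K$ (center the window), but that is a trivial reindexing and not a gap.
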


\subsection{Cantor minimal systems and properly ordered Bratteli diagrams}

A \textit{Cantor dynamical system} is a topological dynamical system $(X,\varphi)$ where $X$ is \textit{a Cantor space}, i.e. a perfect, compact, totally disconnected and metrizable topological space. From now on, we will refer to minimal Cantor dynamical systems as \textit{Cantor minimal systems}.

It is well-known that pointed Cantor minimal systems can be represented by infinite directed multigraphs known as properly ordered Bratteli diagrams. In the rest of this subsection, following \cite{HermanPutnamSkau92} and \cite{Durand10}, we shall give a brief overview of the correspondence between properly ordered Bratteli diagrams and pointed Cantor minimal systems.

An \textit{unordered Bratteli diagram} (or simply, a \textit{Bratteli diagram}) is a pair $(V,E)$ consisting of a vertex set $V$ and an edge set $E$ which can be partitioned into non-empty finite sets $V =\bigsqcup_{n=0}^{\infty} V_n$ and $E=\bigsqcup_{n=1}^{\infty} E_n$ such that the following conditions hold:
\begin{itemize}
\item[-] $V_0=\{v_0\}$ is a singleton.
\item[-] There exist a range map $r: E \rightarrow V$ and a source map $s: E \rightarrow V$ such that $r[E_n] \subseteq V_{n}$ and $s[E_n] \subseteq V_{n-1}$ for all $n \in \natnump$. Moreover, $s^{-1}(v) \neq \emptyset$ for all $v \in V$ and $r^{-1}(v) \neq \emptyset$ for all $v \in V-V_0$.
\end{itemize}

Bratteli diagrams are often given diagrammatic representations as directed graphs consisting of the vertices $V_n$ at (horizontal) level $n$ and the edges $E_{n}$ connecting the vertices at level $n-1$ with the vertices at level $n$. For an example, see Figure 1, where the orientation is taken to be in the downward direction.

\begin{center}
\begin{tikzpicture}[node distance   = 1 cm]
 
  \tikzset{VertexStyle/.style = {shape          = circle,
                                 fill           = black}}
  \tikzset{EdgeStyle/.style   = {thin}}
  \tikzset{LabelStyle/.style =   {draw,
                                  fill           = black}}
                                  
     \node[VertexStyle](v0){};
     \node[below=0.75 cm of v0]{Figure 1};
     \node[VertexStyle,right=of v0](v1){};
     \node[VertexStyle,above=1 cm of v0](v2){};
     \node[VertexStyle,above=1 cm of v1](v3){};
     \node[VertexStyle,right=of v3](v4){};
     \node[VertexStyle,left=of v2](v5){};
     \node[VertexStyle,above=1 cm of v2](v6){};
     \node[VertexStyle,above=1 cm of v3](v7){};
     \node[left=0.75 cm of v6](Vn){$V_{n-1}$};
     \node[left=0.75 cm of v5](Vn){$V_{n}$};
     \node[left=0.75 cm of v0](Vn){$V_{n+1}$};
     \node[above=0.75 cm of v4](En){$E_n$};
     \node[below=0.75 cm of v4](En1){$E_{n+1}$};

     \draw[EdgeStyle](v1) to (v3);
     \draw[EdgeStyle](v1) to (v4);
     \draw[EdgeStyle](v5) to (v1);  
     \draw[EdgeStyle](v5) to (v0);
     \draw[EdgeStyle](v2) to (v0);
     \draw[EdgeStyle](v6) to (v3);
     \draw[EdgeStyle](v6) to (v2);
     \draw[EdgeStyle](v7) to (v5);
     \draw[EdgeStyle](v7) to (v4);
     \draw[EdgeStyle](v7) to (v2);
     \tikzset{EdgeStyle/.append style = {bend left}}
     \draw[EdgeStyle](v0) to (v5); 
     \draw[EdgeStyle](v2) to (v6);
     \draw[EdgeStyle](v7) to (v4);

  \end{tikzpicture}
\end{center}

If we fix a linear order on $V_{n}$ for each $n \in \mathbb{N}$, then the edge set $E_n$ determines a $|V_n| \times |V_{n-1}|$ incidence matrix $M_n=(m_{ij})$ defined by
\[m_{ij}=|\{e \in E_n: r(e)=u_i\ \wedge\ s(e)=w_j\}|\]
where $u_i$ is the $i$-th vertex in $V_n$ and $w_j$ is the $j$-th vertex in $V_{n-1}$. For example, if we order the vertices at each level in Figure 1 from left to right, then the corresponding incidence matrices $M_{n}$ and $M_{n+1}$ are
\[M_n=
\begin{bmatrix}
0 & 1 \\
2 & 1 \\
1 & 0 \\
0 & 2 \\
\end{bmatrix}
\text{ and }
M_{n+1}=
\begin{bmatrix}
2 & 1 & 0 & 0 \\
1 & 0 & 1 & 1 \\
\end{bmatrix}
\]

Given a Bratteli diagram $(V,E)$ and $k,l \in \mathbb{N}$ with $k<l$, define $E_{k+1} \circ ... \circ E_l$ to be the set of paths from $V_k$ to $V_l$. More specifically, $E_{k+1} \circ ... \circ E_l$ is the set
\[\{(e_{k+1},...,e_l): r(e_i)=s(e_{i+1})\ i=k+1,...,l-1\ \wedge\ e_i \in E_i\ i=k+1,...,l \}\]
The corresponding range and source maps are defined by $r(e_{k+1},...,e_l):=r(e_l)$ and $s(e_{k+1},...,e_l):=s(e_{k+1})$ respectively. Observe that the product matrix $M_l \cdot ... \cdot M_{k+1}$ is the incidence matrix of the edge set $E_{k+1} \circ ... \circ E_l$.

For any sequence $0=m_0 < m_1 < m_2 < ...$ of natural numbers, we define the \textit{telescoping} of $(V,E)$ with respect to $(m_i)_{i \in \mathbb{N}}$ to be the Bratteli diagram $(V',E')$ where $V'_{n}=V_{m_n}$, $E'_n=E_{m_{n-1}+1} \circ ... \circ E_{m_n}$ and the range and source maps are defined as above. For example, if we telescope the diagram in Figure 1 to the levels $n-1$ and $n+1$, then we get the diagram in Figure 2.
\\
\begin{center}
\begin{tikzpicture}[node distance   = 1 cm]
 
  \tikzset{VertexStyle/.style = {shape          = circle,
                                 fill           = black}}
  \tikzset{EdgeStyle/.style   = {thin}}
  \tikzset{LabelStyle/.style =   {draw,
                                  fill           = black}}
                                  
     \node[VertexStyle](v0){};
     \node[below=0.5 cm of v0]{Figure 2};
     \node[VertexStyle,right=of v0](v1){};
     \node[VertexStyle,above=1.25 cm of v0](v2){};
     \node[VertexStyle,above=1.25 cm of v1](v3){};
     \node[left=1 cm of v2](Vn){$V_{n-1}$};
     \node[left=1 cm of v0](Vn){$V_{n+1}$};
     \draw[EdgeStyle](v0) to (v2);
     \draw[EdgeStyle](v1) to (v2);
     \draw[EdgeStyle](v0) to (v3);
     \draw[EdgeStyle](v1) to (v3);
     \tikzset{EdgeStyle/.append style = {bend left}}
     \draw[EdgeStyle](v0) to (v2);
     \draw[EdgeStyle](v0) to (v3);
     \draw[EdgeStyle](v1) to (v3);
     \tikzset{EdgeStyle/.append style = {bend right}}
     \draw[EdgeStyle](v0) to (v3);
     \draw[EdgeStyle](v1) to (v3);
  \end{tikzpicture}
\end{center}

A Bratteli diagram $(V,E)$ is called \textit{simple} if there exists a telescoping $(V',E')$ of $(V,E)$ such that all the incidence matrices of $(V',E')$ have only non-zero entries, i.e. every vertex of $(V',E')$ at any level is connected to every vertex at the next level. It is easily checked that $(V,E)$ is simple if and only if for every $n \in \mathbb{N}$ there exists an integer $m>n$ such that there is a path from each vertex in $V_n$ to each vertex in $V_m$.

Two Bratteli diagrams $(V,E)$ and $(V',E')$ are said to be \textit{isomorphic} if there exist bijections $f: V \rightarrow V'$ and $g: E \rightarrow E'$ which preserve the gradings and intertwine the respective source and range maps, i.e. $s' \circ g = f \circ s$ and $r' \circ g = f \circ r$. From now on, the equivalence relation on Bratteli diagrams generated by isomorphism and telescoping will be denoted by $\equivalencesbd$.

An \textit{ordered Bratteli diagram} is a triple of the form $(V,E,\bratleq)$ where $(V,E)$ is a Bratteli diagram and $\bratleq$ is a partial order on $E$ such that for all $e,e' \in E$, the edges $e$ and $e'$ are $\bratleq$-comparable if and only if $r(e)=r(e')$. Let $B=(V,E,\bratleq)$ be an ordered Bratteli diagram. We define the \textit{Bratteli compactum} associated with $B=(V,E,\bratleq)$ to be the space of infinite paths
\[X_B=\{(e_i)_{i \in \natnump}: \forall i \in \natnump\ e_i \in E_i\ \wedge\ r(e_i)=s(e_{i+1})\}\]
endowed with the topology generated by the basic clopen sets of the form
\[[e_1,e_2,\dots,e_k]_B=\{(f_i)_{i \in \natnump} \in X_B: (\forall 1 \leq i \leq k)\ e_i=f_i\}\]
It is straightforward to verify that the metric $d_B$ on $X_B$ defined by
\[d_B((e_i)_{i \in \natnump},(f_i)_{i \in \natnump})=2^{-k}\]
where $k=min\{i: e_i \neq f_i\}$ induces the same topology. We remark that the topological space $X_B$ is determined solely by $(V,E)$ and that if $(V,E)$ is a simple Bratteli diagram and $X_B$ is infinite, then $X_B$ is homeomorphic to the Cantor space.

Given an ordered Bratteli diagram $(V,E,\bratleq)$ and $k<l$ in $\mathbb{N}$, the set of paths $E_{k+1} \circ \dots \circ E_l$ from $V_k$ to $V_l$ can be given an induced lexicographic order defined by
\[(f_{k+1},f_{k+2},\dots,f_l) \prec (e_{k+1},e_{k+2},\dots,e_l)\]
if and only if for some $i$ with $k+1 \leq i \leq l$ we have $f_i \prec e_i$ and $f_j=e_j$ for all $i < j \leq l$. One readily checks that if $(V,E,\bratleq)$ is an ordered Bratteli diagram, $(V',E')$ is a telescoping of $(V,E)$, and $\bratleq'$ is the corresponding lexicographic order, then $(V',E',\bratleq')$ is an ordered Bratteli diagram. In this case, $(V',E',\bratleq')$ is called a telescoping of $(V,E,\bratleq)$. Two ordered Bratteli diagrams are said to be isomorphic if and only if there is an isomorphism of underlying unordered Bratteli diagrams which respects the partial order structure on edges. Let $\equivalencepobd$ denote the equivalence relation on ordered Bratteli diagrams generated by telescoping and isomorphism.

Given an ordered Bratteli diagram $(V,E,\bratleq)$, let $E_{max}$ and $E_{min}$ denote the sets of maximal and minimal elements of $E$ respectively. $(V,E,\bratleq)$ is said to be \textit{properly ordered} if
\begin{itemize}
\item[-] $X_B$ is infinite.
\item[-] $(V,E)$ is a simple Bratteli diagram.
\item[-] There exists a unique path $x_{min}=(e_i)_{i \in \natnump}$ such that $e_i \in E_{min}$ for all $i \in \natnump$ and there exists a unique path $x_{max}=(f_i)_{i \in \natnump}$ such that $f_i \in E_{max}$ for all $i \in \natnump$.
\end{itemize}
In this case, $x_{min}$ and $x_{max}$ are called the minimal and maximal paths respectively. (We remark that some authors require the space $X_B$ of infinite paths to be infinite as a part of the definition of an ordered Bratteli diagram to exclude Bratteli compacta which are finite.)

For every properly ordered Bratteli diagram $B=(V,E,\bratleq)$, we can define a homeomorphism $\lambda_B: X_B \rightarrow X_B$, called the \textit{Vershik map}, as follows:
\begin{itemize}
\item[-] $\lambda_B(x_{max})=x_{min}$
\item[-] $\lambda_B(e_1,\dots,e_k,e_{k+1},\dots)=(f_1,\dots,f_k,e_{k+1},\dots)$ where $k$ is the least integer such that $e_k \notin E_{max}$, $f_k$ is the successor of $e_k$ in $E$, and $(f_1,...,f_{k-1})$ is the unique minimal path in $E_1 \circ E_2 \circ \dots \circ E_{k-1}$ with range equal to the source of $f_k$.
\end{itemize}

It is routine to check that $(X_B, \lambda_B, x_{max})$ is a pointed Cantor minimal system \cite[Section 3]{HermanPutnamSkau92}. Any such dynamical system is called a \textit{Bratteli-Vershik dynamical system}. It turns out that every pointed Cantor minimal system is topologically conjugate to a Bratteli-Vershik dynamical system.

\begin{fact}\cite{HermanPutnamSkau92} \label{theorem-putnamskaumain} For any pointed Cantor minimal system $(X, \varphi, x)$ there exists a properly ordered Bratteli diagram $B=(V,E,\bratleq)$ such that $(X, \varphi,x)$ is (pointed) topologically conjugate to $(X_B, \lambda_B, x_{max})$. Moreover, if $(X_i, \varphi_i, x_i)$ corresponds to the properly ordered Bratteli diagram $B^i=(V^i,E^i,\bratleq^i)$ for $i=0,1$, then $(X_0, \varphi_0, x_0)$ is (pointed) topologically conjugate to $(X_1, \varphi_1, x_1)$ if and only if $B^0 \equivalencepobd B^1$.
\end{fact}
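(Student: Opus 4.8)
The plan is to prove both directions through the machinery of Kakutani--Rokhlin partitions, which convert the dynamics of a Cantor minimal system into the combinatorics of paths in a Bratteli diagram.

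First I would establish the existence half. Starting from a pointed Cantor minimal system $(X,\varphi,x)$, fix a decreasing sequence of clopen neighborhoods $X = U_0 \supseteq U_1 \supseteq \dots$ with $\bigcap_{n} U_n = \{x\}$; such a sequence exists because $X$ is a Cantor space. For each $n$, minimality guarantees via the syndeticity of return times supplied by Fact \ref{theorem-minimalityconditions} that the first-return map to $U_n$ has bounded return times, so $X$ decomposes as a finite clopen \emph{Kakutani--Rokhlin partition}: a disjoint union of towers $\varphi^j(B)$ with $0 \le j < h(B)$, whose bases $B$ partition $U_n$. After refining, one arranges that the partitions are nested, that their union generates the topology of $X$, and that the level-$n$ bases lie inside the level-$(n-1)$ bases. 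The towers at level $n$ then become the vertices $V_n$; the edges in $E_n$ record, with multiplicity, how each level-$n$ tower is assembled by stacking level-$(n-1)$ towers as one traverses it, and the vertical order of this stacking supplies the order $\bratleq$. Minimality forces the diagram to be simple (after telescoping far enough every vertex connects to every vertex, since every orbit meets every base), while the choice of $x$ as the marked point pins down, through this correspondence, the unique maximal path, with a dual argument producing the unique minimal path; hence the diagram is properly ordered. The map sending each point to the sequence of edges tracing its passage through the towers is then the required conjugacy between $(X,\varphi,x)$ and $(X_B,\lambda_B,x_{max})$.

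For the classification half, the implication $B^0 \equivalencepobd B^1 \Rightarrow$ conjugacy is the routine one: an isomorphism of ordered diagrams visibly induces a homeomorphism of path spaces that intertwines the Vershik maps and matches the maximal paths, and a telescoping leaves the path space and its Vershik dynamics unchanged up to the canonical identification, so both generators of $\equivalencepobd$ preserve the pointed conjugacy class.

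The hard part will be the converse: that a pointed topological conjugacy $\pi \colon (X_0,\varphi_0,x_0) \to (X_1,\varphi_1,x_1)$ forces $B^0 \equivalencepobd B^1$. Here I would run an intertwining (back-and-forth) argument on the two Kakutani--Rokhlin sequences. Transporting the partition sequence of $B^1$ through $\pi$ produces two refining sequences of clopen partitions of the \emph{same} space, and by passing to common refinements one selects interleaved sequences of telescoping levels so that, at each alternating stage, the refinement coming from one diagram order-isomorphically matches the refinement coming from the other. This yields the standard zig-zag of telescopings realizing $B^0$ and the $\pi$-pullback of $B^1$ as telescopings that agree through order isomorphisms, whence $B^0 \equivalencepobd B^1$. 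The crux, and the step most in need of care, is verifying that this interleaving can be carried out while respecting both the edge orders and the distinguished maximal paths at every level — that is, that the rigidity of the tower structure lets the combinatorial data of the two diagrams be matched level by level after telescoping.
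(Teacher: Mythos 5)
The paper offers no proof of this statement --- it is recorded as a Fact cited to Herman--Putnam--Skau --- and your sketch reconstructs precisely the argument of that reference: nested Kakutani--Rokhlin tower partitions producing the ordered diagram, simplicity from minimality, uniqueness of the extremal paths from the singleton intersection of the bases, and a back-and-forth intertwining of common refinements/telescopings for the converse; this is the right approach and is correct in outline. The one detail to repair is the matching of the marked point with the distinguished path: if the bases shrink to $x$ itself, then $x$ is read off as the all-minimal path $x_{min}$ (it sits at height $0$ of every tower), so to obtain a conjugacy onto $(X_B,\lambda_B,x_{max})$ you should instead take the bases shrinking to $\varphi(x)$, placing $x$ at the top of every tower.
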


Given a pointed Cantor minimal system $(X, \varphi, x)$, any properly ordered Bratteli diagram $B=(V,E,\bratleq)$ such that $(X, \varphi,x)$ is topologically conjugate to $(X_B, \lambda_B, x_{max})$ will be referred to as a \textit{Bratteli-Vershik representation} of $(X, \varphi, x)$.

\subsection{Analytic and Borel equivalence relations}

A measurable space $(X,\mathcal{B})$ is called a \textit{standard Borel space} if $\mathcal{B}$ is the Borel $\sigma$-algebra of some Polish topology on $X$. An important fact that we will frequently use is that if $A \subseteq X$ is a Borel subset of a standard Borel space $(X,\mathcal{B})$, then $(A,\mathcal{B}\upharpoonright A)$ is also a standard Borel space where
\[ \mathcal{B}\upharpoonright A=\{A \cap B: B \in \mathcal{B}\}\]

Let $(X,\mathcal{B})$ and $(Y,\mathcal{B}')$ be standard Borel spaces. A map $f: X \rightarrow Y$ is called \textit{Borel} if $f^{-1}[B] \in \mathcal{B}$ for all $B \in \mathcal{B}'$. Equivalently, $f$ is Borel if and only if its graph is a Borel subset of the product space $X \times Y$.

An equivalence relation $E \subseteq X \times X$ on a standard Borel space $X$ is called a \textit{Borel equivalence relation} (respectively, an \textit{analytic equivalence relation}) if it is a Borel subset (respectively, an analytic subset) of $X \times X$. Given two analytic equivalence relations $E$ and $F$ on standard Borel spaces $X$ and $Y$ respectively, a Borel map $f: X \rightarrow Y$ is called a \textit{Borel reduction} from $E$ to $F$ if for all $x,y \in X$,
\[ x\mathbin{E}y \Longleftrightarrow f(x)\mathbin{F}f(y)\]
We say that $E$ is \textit{Borel reducible} to $F$, written $E \leq_B F$, if there exists a Borel reduction from $E$ to $F$. Observe that if $E \leq_B F$ and $F$ is Borel, then $E$ is Borel.

Two analytic equivalence relations $E$ and $F$ are said to be \textit{Borel bireducible}, written $E \sim_B F$, if both $E \leq_B F$ and $F \leq_B E$. Clearly $\sim_B$ defines an equivalence relation on the class of analytic equivalence relations. The equivalence class $[E]_{\sim_B}$ will be referred to as the \textit{Borel complexity} of $E$. Finally, we will write $E <_B F$ if both $E \leq_B F$ and $F \nleq_B E$.

Intuitively speaking, a Borel reduction from $E$ to $F$ may be regarded as an ``explicit" computation which allows us to obtain a set of complete invariants for the classification problem associated with $E$ using a set of complete invariants for the classification problem associated with $F$. Thus, if $E$ is Borel reducible to $F$, then the classification problem associated with $E$ is at most as complex as the classification problem associated with $F$.

It turns out that there are no $\leq_B$-maximal elements in the $\leq_B$-hierarchy of Borel equivalence relations. In more detail, given a Borel equivalence relation $E$ on a standard Borel space $X$, consider the Borel equivalence relation $E^+$ on the space $X^{\mathbb{N}}$ defined by
\[ x E^+ y \Leftrightarrow \{[x_n]_E: n \in \mathbb{N}\}=\{[y_n]_E: n \in \mathbb{N}\}\]
The operation $E \mapsto E^+$ is called the \textit{Friedman-Stanley jump}. That $E <_B E^+$ for Borel equivalence relations with more than one equivalence class is a result of Friedman and Stanley \cite{FriedmanStanley89}.

Let $\Delta_X$ denote the identity relation on the standard Borel space $X$. Of particular interest in this paper will be the Borel equivalence relation $\jumpofr$. Note that it follows from the Borel isomorphism theorem \cite[Corollary 1.3.8]{Gao09} that $\Delta_X^+ \sim_B \Delta_Y^+$ for any uncountable standard Borel spaces $X$ and $Y$.

Even though there are no $\leq_B$-maximal Borel equivalence relations, if we restrict our attention to \textit{countable Borel equivalence relations}, i.e. Borel equivalence relations with countable equivalence classes, then there exists a countable Borel equivalence relation $E_{\infty}$ which is \textit{universal} in the sense that for any countable Borel equivalence relation $F$ we have that $F \leq_B E$. The universal countable Borel equivalence relation $E_{\infty}$ has numerous realizations in various areas of mathematics. For example, topological conjugacy of subshifts over a finite alphabet is a universal countable Borel equivalence relation \cite{Clemens09}.

A remarkable theorem of Feldman and Moore states that any countable Borel equivalence relation $E$ on a standard Borel space $X$ is the orbit equivalence relation of a Borel action of a countable group $G$ on $X$. It easily follows from the Feldman-Moore theorem that $E_{\infty} \leq_B \jumpofr$. On the other hand, it is well-known that $\jumpofr$ is not \textit{essentially countable}, i.e. it is not Borel reducible to any countable Borel equivalence relation. Therefore, $E_{\infty} <_B \jumpofr$. (For example, see \cite[Theorem 17.1.3 and Claim 17.2.1]{Kanovei08}.)

\subsection{The standard Borel spaces of Cantor minimal systems and properly ordered Bratteli diagrams}

In order to discuss the Borel complexity of an equivalence relation on a class of structures, we need to code these structures as elements of a Polish space. In this subsection, we will briefly describe the construction of the standard Borel spaces of Cantor minimal systems and properly ordered Bratteli diagrams.

For any Cantor minimal system $(X,\varphi)$, after choosing a clopen basis for the topology of $X$, one can find a homeomorphism from $X$ to $\cantorspace$ and construct a topologically conjugate system $(\cantorspace,\psi)$. Therefore, it is sufficient to code those Cantor minimal systems which have $\cantorspace$ as their underlying topological spaces.

Let $\mathbb{B}$ be the countable atomless Boolean algebra of clopen subsets of $\cantorspace$. It is well-known that the homeomorphisms group $H(\cantorspace)$ of $\cantorspace$ and the automorphism group $Aut(\mathbb{B})$ of $\mathbb{B}$ are isomorphic via the map $\varphi \mapsto \varphi_*^{-1}$ where the \textit{dual} map of $\varphi$ is defined by $\varphi_*(U)=\varphi^{-1}[U]$ for every $U \in \mathbb{B}$. Thus, we can identify $H(\cantorspace)$ with the subspace $Aut(\mathbb{B})$ of the Polish space $\mathbb{B}^{\mathbb{B}}$. It is easily checked that $H(\cantorspace)$ is a $G_{\delta}$ subset of $\mathbb{B}^{\mathbb{B}}$ and hence $H(\cantorspace)$ is a Polish space with the induced topology. Indeed, it is a closed subgroup of the Polish group $Sym(\mathbb{B})$. Using Fact \ref{theorem-minimalityconditions}, it is straightforward to check that the set $\cantorminimalspace$ of minimal homeomorphism of $\cantorspace$ is a Borel subset of $H(\cantorspace)$ and hence is a standard Borel space. The standard Borel space of pointed Cantor minimal systems is simply $\cantorminimalspace^* := \cantorminimalspace \times \cantorspace$.

Let $\cong_{tc}$ and $\cong_{tc}^*$ denote the topological conjugacy relations on $\cantorminimalspace$ and $\cantorminimalspace^*$ respectively. It is easily seen that both $\cong_{tc}$ and $\cong_{tc}^*$ are analytic equivalence relations since they are given by the Borel actions of $H(\cantorspace)$ on the standard Borel spaces $\cantorminimalspace$ and $\cantorminimalspace^*$ respectively by conjugation.

In order to construct the standard Borel space of Bratteli diagrams, we shall code each Bratteli diagram by an element of the Polish space $(\bold{V} \times \bold{V})^{\bold{E}}$ where $\bold{V}$ and $\bold{E}$ are fixed countably infinite sets. Given a Bratteli diagram $(V,E)$, we may assume without loss of generality that $V=\bold{V}$ and $E=\bold{E}$. We then code $(V,E)$ by the function $f \in (\bold{V} \times \bold{V})^{\bold{E}}$ defined by $f(e)=(s(e),r(e))$ for each edge $e \in \bold{E}$, where $r$ and $s$ are the corresponding range and source maps. Under this coding, the subset $\mathcal{SBD}$ of $(\bold{V} \times \bold{V})^{\bold{E}}$ consisting of elements coding simple Bratteli diagrams is Borel and hence is a standard Borel space.

To construct the standard Borel space of ordered Bratteli diagrams, we need to incorporate the partial order structure on the edges. One can identify the partial order relation on the edges with an element of $2^{\bold{E} \times \bold{E}}$ and it is not difficult to check that the set of elements in $(\bold{V} \times \bold{V})^{\bold{E}} \times 2^{\bold{E} \times \bold{E}}$ coding ordered Bratteli diagrams is Borel. Given an ordered Bratteli diagram $(\mathbf{V},\mathbf{E},\bratleq)$, for each vertex $v \in \mathbf{V}$, there exists a unique path from the root $v_0$ to $v$ each element of which is in $\mathbf{E}_{min}$. It follows that if we ``mark" the minimal edges in the diagrammatic representation of $(\mathbf{V},\mathbf{E})$ together with the vertices which they connect, then we obtain a tree $\mathbf{T}_{min}$ whose edge set is exactly $\mathbf{E}_{min}$. Since $\mathbf{T}_{min}$ is finitely branching, K\"{o}nig's lemma implies that the following are equivalent
\begin{itemize}
\item[-] There is a unique infinite branch in $\mathbf{T}_{min}$.
\item[-] For every vertex $v \in \mathbf{T}_{min}$, there exists a unique successor $v^+$ of $v$ in $\mathbf{T}_{min}$ such that there exist infinitely many $w \in \mathbf{T}_{min}$ above $v^+$.
\end{itemize}
Similarly, one can argue that having a unique maximal path can be expressed with a Borel condition that only quantifies over countable sets. It easily follows the subset $\mathcal{POBD}$ of $(\bold{V} \times \bold{V})^{\bold{E}} \times 2^{\bold{E} \times \bold{E}}$ consisting of properly ordered Bratteli diagrams is Borel and hence is a standard Borel space. Notice that given an element of $\mathcal{POBD}$, we can select its unique minimal and maximal paths in a Borel way.

Let $\equivalencesbd$ and $\equivalencepobd$ denote equivalence of simple Bratteli diagrams and properly ordered Bratteli diagrams on the standard Borel spaces $\mathcal{SBD}$ and $\mathcal{POBD}$ respectively. A straightforward but tedious computation shows that both $\equivalencesbd$ and $\equivalencepobd$ are analytic equivalence relations.

\section{Representing pointed Cantor minimal systems by countable atomless $\mathbb{Z}$-syndetic algebras}

Let $\setshift$ denote the shift map on $\powersetz$ defined by $\setshift(A):=\{a-1: a \in A\}$ for all $A \in \powersetz$. A Boolean subalgebra of $\powersetz$ is said to be a $\mathbb{Z}$-\textit{syndetic algebra} if its non-empty elements are syndetic sets and it is closed under both the shift map $\setshift$ and $\setshift^{-1}$.

In this section, we will show that pointed Cantor minimal systems can be represented by countable atomless $\mathbb{Z}$-syndetic algebras and characterize minimal subshifts over finite alphabets in terms of the generating sets of their associated Boolean algebras. We shall assume familiarity of the reader with Boolean algebras and Stone duality. We refer the reader to \cite{Koppelberg89} for a general background.

Given a pointed Cantor minimal system $(X,\varphi,x)$, let $\mathbb{B}_X$ denote the Boolean algebra of clopen subsets of $X$ and define its \textit{return times algebra} $Ret(X,\varphi,x)$ to be the collection 
\[ Ret(X,\varphi,x):=\{Ret_U(X,\varphi,x): U \in \mathbb{B}_X\} \]
It is easily seen that $Ret(X,\varphi,x)$ is a Boolean subalgebra of $\powersetz$. Moreover, by Fact \ref{theorem-minimalityconditions}, the minimality of $(X,\varphi,x)$ implies that the homomorphism $U \mapsto Ret_U(X,\varphi,x)$ is injective and that $Ret(X,\varphi,x)$ is a countable atomless $\mathbb{Z}$-syndetic algebra. From now on, any countable atomless $\mathbb{Z}$-syndetic algebra will be referred to as a \textit{return times algebra}. Our choice of terminology is justified by the following lemma.

\begin{lemma}\label{lemma-returntimestopointed} If $\A$ is a return times algebra, then there exists a pointed Cantor minimal system $(X,\varphi,x)$ such that $\A=Ret(X,\varphi,x)$.
\end{lemma}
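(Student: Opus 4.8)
Given a return times algebra $\A$, the Stone space $X := \mathrm{Ult}(\A)$ of ultrafilters on $\A$ is a compact, totally disconnected, metrizable space, and since $\A$ is countable and atomless, $X$ is homeomorphic to the Cantor space. The shift map $\setshift$ restricts to a Boolean algebra automorphism of $\A$ (because $\A$ is closed under both $\setshift$ and $\setshift^{-1}$), and by Stone duality this automorphism induces a homeomorphism $\varphi: X \to X$. Explicitly, if $\varphi_*$ denotes the dual automorphism of $\A$ given by $\varphi_*(A) = \setshift^{-1}(A)$, then $\varphi$ is the unique homeomorphism whose dual is $\setshift$. The distinguished point will be the principal-type ultrafilter concentrated at $0 \in \mathbb{Z}$, namely
\[ x := \{A \in \A : 0 \in A\}, \]
which is a genuine ultrafilter on $\A$ since for every $A \in \A$ exactly one of $A$, $\mathbb{Z} \setminus A$ contains $0$.

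\textbf{The heart of the argument is to verify that $\A$ is literally recovered as $Ret(X,\varphi,x)$.} First I would unwind the return-time set of $x$ to a basic clopen set. Under Stone duality, each $A \in \A$ corresponds to the clopen set $U_A = \{p \in X : A \in p\}$, and these exhaust $\mathbb{B}_X$. The key computation is that $\varphi^i(x) \in U_A$ if and only if $i \in A$: iterating $\varphi$ corresponds to applying the dual of $\setshift$, so $\varphi^i(x)$ is the ultrafilter $\{B \in \A : i \in B\}$, which contains $A$ precisely when $i \in A$. Consequently
\[ Ret_{U_A}(X,\varphi,x) = \{i \in \mathbb{Z} : \varphi^i(x) \in U_A\} = A, \]
and since every clopen set is some $U_A$, this shows $Ret(X,\varphi,x) = \A$ on the nose, giving the desired equality rather than mere isomorphism.

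\textbf{The remaining obligation, and the main obstacle, is confirming that $(X,\varphi,x)$ is in fact a pointed Cantor minimal system.} Being a Cantor dynamical system is already in hand (Stone space of a countable atomless algebra, plus a homeomorphism), so the real content is minimality. Here I would lean on Fact \ref{theorem-minimalityconditions}, specifically criterion (d): it suffices to show that the orbit of $x$ is dense and that $x$ is an almost periodic point. Density of the orbit follows because the sets $\{\varphi^i(x) : i \in \mathbb{Z}\}$ meet every nonempty basic clopen $U_A$ — indeed $U_A \neq \emptyset$ forces $A \neq \emptyset$, and by the computation above $\varphi^i(x) \in U_A$ for any $i \in A$. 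Almost periodicity of $x$ is exactly where the syndeticity hypothesis does its work: a basic open neighborhood of $x$ has the form $U_A$ with $0 \in A$, hence $A$ is a nonempty element of $\A$ and therefore syndetic, and $Ret_{U_A}(X,\varphi,x) = A$ is the required syndetic return-time set. I expect the only delicate points to be checking that $\varphi$ as defined is genuinely a homeomorphism conjugating the shift correctly (a routine Stone-duality verification that the dual of $\setshift$ is well defined precisely because $\A$ is shift-invariant in both directions) and confirming that the basic clopen neighborhoods of $x$ are cofinally indexed by the elements $A \in \A$ containing $0$; both reduce to standard facts about Stone spaces together with the defining closure and syndeticity properties of a return times algebra.
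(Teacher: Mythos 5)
Your proposal is correct and follows essentially the same route as the paper: form the Stone space of $\A$, take the dual homeomorphism of the shift automorphism $\setshift$ and the ultrafilter $\{A \in \A : 0 \in A\}$ as base point, compute that the return times to the clopen set $U_A$ are exactly $A$, and deduce minimality from orbit density plus almost periodicity (syndeticity) via Fact \ref{theorem-minimalityconditions}. The only caveat is a slight wobble in your description of which map is dual to which (the paper fixes the convention $\setshift_*(w) = \setshift^{-1}[w]$, under which $\setshift_*^i(x)=\{B : i \in B\}$ as you claim), but your key computation lands on the correct formula, so this is purely a matter of bookkeeping.
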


\begin{proof} Let $\St(\A)$ denote the Stone space of $\A$ consisting of ultrafilters on $\A$ topologized by the clopen sets of the form $\{w \in \St(\A): A \in w\}$ for some $A \in \A$. It is well-known that there exists a unique countable atomless Boolean algebra up to isomorphism and hence $\A$ is isomorphic to the Boolean algebra $\mathbb{B}$ of clopen subsets of $\cantorspace$. It follows from Stone duality that $\St(\A)$ is homeomorphic to $\cantorspace$.

Let $\setshift_*: \St(\A) \rightarrow \St(\A)$ be the dual homeomorphism of the automorphism $\setshift$ of $\A$ given by $\setshift_*(w):=\setshift^{-1}[w]$ and let $x_{\A} \in \St(\A)$ be the ultrafilter
\[\{A \in \A: 0 \in A\}\]
We claim that $(\St(\A),\setshift_*,x_{\A})$ is a pointed Cantor minimal system such that $\A=Ret(\St(\A),\setshift_*,x_{\A})$. For each $A \in \A$, the set of return times of $x_{\A}$ to the clopen set $U=\{w \in \St(\A): A \in w\}$ is
\begin{align*}
Ret_{U}(\St(\A),\setshift_*,x_{\A})&=\{k \in \mathbb{Z}: \setshift_*^k(x_{\A}) \in U\}\\
&=\{k \in \mathbb{Z}: A \in \setshift_*^k(x_{\A})\}\\
&=\{k \in \mathbb{Z}: k \in A\}\\
&=A
\end{align*}
It follows that $Ret(\St(\A),\setshift_*,x_{\A})=\A$ and that $x_{\A}$ is an almost periodic point. Furthermore, the orbit of $x_{\A}$ meets every non-empty clopen set and hence is dense in $\St(\A)$. Therefore $(\St(\A),\setshift_*,x_{\A})$ is a pointed Cantor minimal system by Fact \ref{theorem-minimalityconditions}.
\end{proof}

We shall refer to $(\St(\A),\setshift_*,x_{\A})$ as the \textit{ultrafilter dynamical system} associated with the return times algebra $\A$. The following lemma shows that every pointed Cantor minimal system can be represented by the ultrafilter dynamical system associated with its return times algebra.

\begin{lemma}\label{lemma-returntimesreprensentation} Let $(X,\varphi,x)$ be a pointed Cantor minimal system and let $\A$ be its return times algebra $Ret(X,\varphi,x)$. Then $(X,\varphi,x)$ is topologically conjugate to $(\St(\A),\setshift_*,x_{\A})$.
\end{lemma}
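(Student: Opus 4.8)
The plan is to produce the topological conjugacy as the Stone dual of the Boolean isomorphism that defines the return times algebra. Write $\mathbb{B}_X$ for the Boolean algebra of clopen subsets of $X$ and let $\Phi\colon \mathbb{B}_X \to \A$ be the map $\Phi(U) = Ret_U(X,\varphi,x)$. By the remarks preceding Lemma \ref{lemma-returntimestopointed}, minimality guarantees that $\Phi$ is injective, and it is surjective by the very definition of $\A = Ret(X,\varphi,x)$; since it is clearly a Boolean homomorphism, $\Phi$ is an isomorphism of Boolean algebras. Because $X$ is a Cantor space, it is canonically homeomorphic to the Stone space $\St(\mathbb{B}_X)$ via $\eta(p) = \{U \in \mathbb{B}_X : p \in U\}$, so I will identify points of $X$ with ultrafilters on $\mathbb{B}_X$. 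Stone duality then turns $\Phi$ into a homeomorphism $\pi\colon \St(\A) \to X$, described concretely by letting $\pi(w)$ be the unique point $p \in X$ satisfying $p \in U \Leftrightarrow Ret_U(X,\varphi,x) \in w$ for every $U \in \mathbb{B}_X$.

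It then remains to check that $\pi$ is pointed and equivariant, i.e. that $\pi(x_{\A}) = x$ and $\pi \circ \setshift_* = \varphi \circ \pi$. The first is a direct unwinding: the ultrafilter on $\mathbb{B}_X$ corresponding to $x_{\A}$ is $\{U : 0 \in Ret_U(X,\varphi,x)\} = \{U : x \in U\} = \eta(x)$, so $\pi(x_{\A}) = x$. The equivariance rests on the single identity
\[ Ret_{\varphi^{-1}[U]}(X,\varphi,x) = \setshift\bigl(Ret_U(X,\varphi,x)\bigr), \]
which I would verify by noting that $i \in Ret_{\varphi^{-1}[U]}(X,\varphi,x)$ iff $\varphi^{i+1}(x) \in U$ iff $i+1 \in Ret_U(X,\varphi,x)$ iff $i \in \setshift(Ret_U(X,\varphi,x))$.

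Granting this, fix $w \in \St(\A)$ and set $p = \pi(w)$. For any clopen $U$ one has $Ret_U(X,\varphi,x) \in \setshift_*(w) = \setshift^{-1}[w]$ iff $\setshift(Ret_U(X,\varphi,x)) = Ret_{\varphi^{-1}[U]}(X,\varphi,x) \in w$ iff $p \in \varphi^{-1}[U]$ iff $\varphi(p) \in U$. Thus $\pi(\setshift_*(w))$ and $\varphi(p)$ belong to exactly the same clopen sets and therefore coincide, giving $\pi \circ \setshift_* = \varphi \circ \pi$ and completing the conjugacy.

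I expect no serious obstacle: once $\Phi$ is recognized as a Boolean isomorphism, the homeomorphism $\pi$ is handed to us by Stone duality, and the only real content is the bookkeeping identity relating $Ret_{\varphi^{-1}[U]}$ to the set shift $\setshift$. The one point demanding care is keeping the variance conventions straight — namely that $\Phi$ intertwines the dual map $U \mapsto \varphi^{-1}[U]$ with $\setshift$ on the algebra side, which dualizes to intertwining $\varphi$ with $\setshift_*$ on the space side — so that the shift directions match and it is $\pi$, rather than $\pi^{-1}$, that is the equivariant map.
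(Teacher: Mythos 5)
Your proof is correct and takes essentially the same route as the paper: both obtain the conjugacy as the Stone dual of the Boolean isomorphism between $\mathbb{B}_X$ and $\A=Ret(X,\varphi,x)$, with the equivariance reduced to the identity $Ret_{\varphi^{-1}[U]}(X,\varphi,x)=\setshift(Ret_U(X,\varphi,x))$. The only (immaterial) difference is that you dualize $U\mapsto Ret_U(X,\varphi,x)$ to get a map $\St(\A)\to X$, whereas the paper dualizes the inverse isomorphism $\rho$ and composes with $\theta:X\to\St(\mathbb{B}_X)$ to get the conjugacy in the other direction.
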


\begin{proof} Recall that the map $\rho: \A \rightarrow \mathbb{B}_X$ given by $Ret_U(X,\varphi,x) \mapsto U$ is an isomorphism of Boolean algebras. Let $\rho_*: \St(\mathbb{B}_X) \rightarrow \St(\A)$ be its dual homeomorphism given by $\rho_*(w):=\rho^{-1}[w]$ for every $w \in \St(\mathbb{B}_X)$. By Stone's theorem, we know that the map $\theta: X \mapsto \St(\mathbb{B}_X)$ given by $w \mapsto \{U \in \mathbb{B}_X: w \in U\}$ is a homeomorphism. We claim that the homeomorphism $\rho_* \circ \theta$ is a topological conjugacy between $(X,\varphi,x)$ and $(\St(\A),\setshift_*,x_{\A})$. Obviously, $(\rho_* \circ \theta)(x)=x_{\A}$. Moreover, for all $w \in X$, we have that
\begin{align*}
((\rho_* \circ\theta)\circ \varphi)(w) &= \rho_*(\{U \in \mathbb{B}_X: \varphi(w) \in U\}) \\
&= \rho_*(\{\varphi[U] \in \mathbb{B}_X: w \in U\})\\
&=\{(\rho^{-1}\circ\varphi)[U] \in \mathbb{B}_X: w \in U\}\\
&=\{Ret_{\varphi[U]}(X,\varphi,x) \in \A: w \in U\}\\
&=\setshift_*(\{Ret_{U}(X,\varphi,x) \in \A: w \in U\})\\
&=\setshift_*(\{\rho^{-1}[U] \in \mathbb{B}_X: w \in U\})\\
&=\setshift_*(\rho_*(\{U \in \mathbb{B}_X: w \in U\}))\\
&=(\setshift_* \circ (\rho_* \circ\theta))(w)
\end{align*}
\end{proof}

Consequently, the collection of return time algebras is a set of complete invariants for topological conjugacy of pointed Cantor minimal systems.

\begin{corollary}\label{corollary-returntimesinvariant} Two pointed Cantor minimal systems $(X,\varphi,x)$ and $(Y, \psi, y)$ are topologically conjugate if and only if $Ret(X,\varphi,x)=Ret(Y,\psi,y)$.
\end{corollary}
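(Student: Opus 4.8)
The plan is to derive both directions from the machinery already in place, with essentially all of the substantive work having been carried out in Lemma \ref{lemma-returntimesreprensentation}. Since pointed topological conjugacy is an equivalence relation, I would handle the two implications separately.

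For the reverse implication, suppose $Ret(X,\varphi,x)=Ret(Y,\psi,y)=:\A$. By Lemma \ref{lemma-returntimesreprensentation}, $(X,\varphi,x)$ is topologically conjugate to the ultrafilter dynamical system $(\St(\A),\setshift_*,x_{\A})$, and likewise $(Y,\psi,y)$ is topologically conjugate to $(\St(\A),\setshift_*,x_{\A})$ --- literally the \emph{same} system, since the two return times algebras coincide as subalgebras of $\powersetz$. Transitivity and symmetry of pointed topological conjugacy then immediately yield that $(X,\varphi,x)$ and $(Y,\psi,y)$ are topologically conjugate.

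For the forward implication, let $\pi: X \rightarrow Y$ be a pointed topological conjugacy, so that $\pi \circ \varphi = \psi \circ \pi$ and $\pi(x)=y$. The key observation is that $\pi$ intertwines return times verbatim: for any clopen $U \subseteq X$ and any $i \in \mathbb{Z}$ one has $\varphi^i(x) \in U \Leftrightarrow \psi^i(y) \in \pi[U]$, because $\pi(\varphi^i(x))=\psi^i(\pi(x))=\psi^i(y)$. Since $\pi$ is a homeomorphism, $\pi[U]$ is clopen in $Y$, and hence $Ret_U(X,\varphi,x)=Ret_{\pi[U]}(Y,\psi,y)$ as subsets of $\mathbb{Z}$. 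As $U$ ranges over the clopen subsets of $X$ and $\pi$ induces a bijection between $\mathbb{B}_X$ and $\mathbb{B}_Y$, the collections $\{Ret_U(X,\varphi,x): U \in \mathbb{B}_X\}$ and $\{Ret_V(Y,\psi,y): V \in \mathbb{B}_Y\}$ coincide, that is, $Ret(X,\varphi,x)=Ret(Y,\psi,y)$.

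I do not anticipate any genuine obstacle here; the only point requiring care is that the conclusion asserts \emph{equality} of the return times algebras as subalgebras of $\powersetz$, rather than mere abstract isomorphism. This is precisely what the verbatim intertwining in the forward direction delivers, since both algebras are computed inside the common ambient space $\powersetz$ and the identity $Ret_U(X,\varphi,x)=Ret_{\pi[U]}(Y,\psi,y)$ matches them element-for-element; the reverse direction correspondingly exploits that identical algebras produce identical ultrafilter dynamical systems.
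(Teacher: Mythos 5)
Your proposal is correct and follows essentially the same route as the paper: the forward direction via the observation that a pointed conjugacy $\pi$ gives $Ret_U(X,\varphi,x)=Ret_{\pi[U]}(Y,\psi,y)$ and induces a bijection $\mathbb{B}_X \to \mathbb{B}_Y$, and the reverse direction by noting that both systems are conjugate to the common ultrafilter dynamical system $(\St(\A),\setshift_*,x_{\A})$ via Lemma \ref{lemma-returntimesreprensentation}. You merely spell out the intertwining identity $\pi(\varphi^i(x))=\psi^i(y)$ a bit more explicitly than the paper does.
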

\begin{proof} Assume that $(X,\varphi,x)$ and $(Y, \psi, y)$ are topologically conjugate via the homeomorphism $\pi: X \rightarrow Y$. Since $\pi$ induces an isomorphism between $\mathbb{B}_Y$ and $\mathbb{B}_X$, we have that
\begin{align*}
Ret(X,\varphi,x)&=\{Ret_U(X,\varphi,x): U \in \mathbb{B}_X\}\\
&=\{Ret_{\pi[U]}(Y,\psi,y): U \in \mathbb{B}_X\}\\
&=\{Ret_{V}(Y,\psi,y): V \in \mathbb{B}_Y\}=Ret(Y,\psi,y)
\end{align*}
For the converse direction, assume that $Ret(X,\varphi,x)=\A=Ret(Y,\psi,y)$. Then it follows from Lemma \ref{lemma-returntimesreprensentation} that $(X,\varphi,x)$ and $(Y,\psi,y)$ are both topologically conjugate to $(\St(\A),\setshift_*,x_{\A})$.
\end{proof}

In the rest of this paper, we will often need to regard subsets of integers of the form $Ret_U(X,\varphi,x)$ as elements of $2^{\mathbb{Z}}$. From now on, the corresponding characteristic function will be denoted by $ret_U(X,\varphi,x)$.

Recall that a \textit{subshift} over a finite alphabet $\mathfrak{a}$ is a topological dynamical system $(O,\seqshift)$ where $O$ is a closed $\seqshift$-invariant subset of $\mathfrak{a}^{\mathbb{Z}}$ and $\seqshift$ is the left-shift map defined by $(\seqshift(\alpha))(i)=\alpha(i+1)$ for all $i \in \mathbb{Z}$. For notational convenience, we shall often drop the left-shift map $\seqshift$ and refer to $O$ as a subshift. For any sequence $\alpha \in \mathfrak{a}^{\mathbb{Z}}$, we define the \textit{subshift generated by} $\alpha$ to be the closure of its orbit $\orbit(\alpha)$ in $\mathfrak{a}^{\mathbb{Z}}$.

A subshift $O \subseteq \mathfrak{a}^{\mathbb{Z}}$ is said to be \textit{minimal} if the topological dynamical system $(O,\seqshift)$ is minimal. Being a closed subspace of a Cantor space, any subshift is totally disconnected, compact, and metrizable. If it is also minimal and infinite, then it has no isolated points and hence is a Cantor space itself. Thus, infinite minimal subshifts are Cantor minimal systems. Finite minimal subshifts are obviously classified up to topological conjugacy by their cardinalities. From now on, we shall exclude these trivial cases and assume that minimal subshifts are infinite.

We shall next characterize the Cantor minimal systems that are topologically conjugate to minimal subshifts over finite alphabets in terms of the generating sets of their return times algebras. We begin by noting the following trivial but useful observation.

\begin{proposition}\label{proposition-rettimecontinuous} Let $(X,\varphi)$ be a topological dynamical system and let $U$ be a clopen subset of $X$. Then the map $r_U: X \rightarrow 2^{\mathbb{Z}}$ defined by $x \mapsto ret_U(X,\varphi,x)$ is continuous. Moreover, $r_U \circ \varphi = \seqshift \circ r_U$.
\end{proposition}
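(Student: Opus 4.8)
The plan is to prove the two assertions separately, starting with the functional equation $r_U \circ \varphi = \seqshift \circ r_U$, since this computation will also clarify the structure needed for continuity. First I would unwind the definitions: for a fixed point $x \in X$, the value $r_U(x) = ret_U(X,\varphi,x)$ is the characteristic function of the return time set $Ret_U(X,\varphi,x) = \{i \in \mathbb{Z}: \varphi^i(x) \in U\}$. Thus for each $i \in \mathbb{Z}$, we have $(r_U(x))(i) = 1$ if and only if $\varphi^i(x) \in U$. To verify the functional equation, I would evaluate both sides at an arbitrary integer $i$. On the left, $(r_U(\varphi(x)))(i) = 1$ iff $\varphi^i(\varphi(x)) = \varphi^{i+1}(x) \in U$. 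On the right, $(\seqshift(r_U(x)))(i) = (r_U(x))(i+1) = 1$ iff $\varphi^{i+1}(x) \in U$. Since these conditions coincide for every $i$, the two sides agree, giving $r_U \circ \varphi = \seqshift \circ r_U$.

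For continuity, the key observation is that $2^{\mathbb{Z}}$ carries the product topology, so a map into it is continuous precisely when each coordinate map is continuous. Here the $i$-th coordinate of $r_U$ is the map $x \mapsto (r_U(x))(i)$, which takes the value $1$ exactly on the set $\varphi^{-i}[U]$ and $0$ on its complement. Since $\varphi$ is a homeomorphism and $U$ is clopen, each $\varphi^{-i}[U]$ is clopen in $X$, so the preimage of each of the two points $\{0\}$ and $\{1\}$ in the discrete factor $2$ is clopen, whence each coordinate map is continuous. Therefore $r_U$ is continuous as a map into the product $2^{\mathbb{Z}}$.

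There is no genuine obstacle in this proposition; it is exactly the ``trivial but useful'' observation the authors advertise. The only point requiring mild care is the indexing convention in the definition of $\seqshift$, namely $(\seqshift(\alpha))(i) = \alpha(i+1)$, which must be matched correctly against the shift $A \mapsto \{a - 1 : a \in A\}$ on sets of integers so that the equivariance comes out with the right sign. I would make sure the bookkeeping is consistent by working directly with the characteristic-function description rather than with the set-theoretic shift, as done above, which keeps the matching of the two conventions transparent.
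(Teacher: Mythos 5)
Your proof is correct and follows essentially the same route as the paper: continuity is checked coordinatewise using that each $\varphi^{-i}[U]$ is clopen (equivalently, that $\chi_U \circ \varphi^i$ is continuous), and the equivariance identity is verified by unwinding the definitions at each index. The paper merely states these steps more tersely; your write-up adds no new ideas but fills in the same computation.
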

\begin{proof} Since $U$ is clopen, the characteristic function $\chi_U(x): X \rightarrow 2$ is continuous and hence $r_U(x)=(\chi_U(\varphi^n(x)))_{n \in \mathbb{Z}}$ is continuous. It follows from the definition of $r_U$ that $r_U \circ \varphi = \seqshift \circ r_U$.
\end{proof}

Fix a Cantor minimal system $(X,\varphi)$. For each non-empty subset $F \subseteq \mathbb{B}_X$ consider the map $ret_F: X \rightarrow (2^{\mathbb{Z}})^F$ given by $x \mapsto (ret_U(X,\varphi,x))_{U \in F}$. The map $ret_F$ is continuous on each component by Proposition \ref{proposition-rettimecontinuous} and hence is continuous on the space $(2^{\mathbb{Z}})^F$ endowed with the product topology. Moreover, $ret_F \circ \varphi = \lambda_F \circ ret_F$ where $\lambda_F$ is the componentwise shift map defined by $\lambda_F(w)=(\sigma(w(U)))_{U \in F}$. Consider the space $(2^F)^{\mathbb{Z}}$ endowed with the product topology where each component $2^F$ has the discrete topology. Let $\eta_F$ be the map from $(2^{\mathbb{Z}})^F$ to $(2^F)^{\mathbb{Z}}$ given by
\[ (\eta_F(w)(k))(U)=(w(U))(k)\]
for all $w \in (2^{\mathbb{Z}})^F$, $U \in F$ and $k \in \mathbb{Z}$. It is easily checked that $\seqshift \circ \eta_F = \eta_F \circ \lambda_F$ and $\eta_F$ is a bijection. Moreover, $\eta_F$ is continuous whenever $F$ is finite.

It follows that if there exists a finite $F \subseteq \mathbb{B}_X$ such that $ret_F$ is injective, then $\eta_F \circ ret_F$ is a topological conjugacy from $(X,\varphi)$ onto a minimal subshift over the alphabet $2^{F}$. In order for $ret_F$ to be injective, it is sufficient for $F$ to generate $\mathbb{B}_X$ under $\varphi$ and the Boolean operations, since $\mathbb{B}_X$ separates the points of $X$. On the other hand, for each $x \in X$, the Boolean algebras $\mathbb{B}_X$ and $Ret(X,\varphi,x)$ are isomorphic via the map $U \mapsto Ret_U(X,\varphi,x)$. Hence, $\mathbb{B}_X$ is generated by finitely many elements under $\varphi$ and the Boolean operations if and only if $Ret(X,\varphi,x)$ is generated by finitely many elements under $\setshift$ and the Boolean operations for some (equivalently, every) $x \in X$.

These observations suggest the following definition. A return times algebra $\A$ is said to be \textit{finitely generated} if there exists a finite subset $F \subseteq \A$ such that $\A$ is the Boolean algebra generated by the collection $\{\setshift^k(A): A \in F\ \wedge\ k \in \mathbb{Z}\}$. In this case, the subset $F \subseteq \A$ is called a \textit{generating set} of $\A$. We are now ready to characterize pointed minimal subshifts in terms of their return times algebras.

\begin{theorem}\label{theorem-finitegeneration} Let $(X,\varphi,x)$ be a pointed Cantor minimal system. Then $(X,\varphi,x)$ is topologically conjugate to a pointed minimal subshift over some finite alphabet if and only if $Ret(X,\varphi,x)$ is finitely generated.
\end{theorem}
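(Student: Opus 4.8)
The plan is to lean almost entirely on the machinery assembled in the paragraphs immediately preceding the statement, together with Corollary \ref{corollary-returntimesinvariant}. The crucial ingredient is the observation recorded there: via the Boolean algebra isomorphism $U \mapsto Ret_U(X,\varphi,x)$ between $\mathbb{B}_X$ and $Ret(X,\varphi,x)$, which conjugates the $\varphi$-action on clopen sets to the $\setshift$-action on $\powersetz$, the algebra $\mathbb{B}_X$ is finitely generated under $\varphi$ and the Boolean operations if and only if $Ret(X,\varphi,x)$ is finitely generated in the sense of the definition preceding the theorem. Throughout I would freely pass between a finite generating set $F \subseteq \mathbb{B}_X$ and its image, a finite generating set of the return times algebra.

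For the direction from finite generation to a subshift, suppose $Ret(X,\varphi,x)$ is finitely generated, and let $F \subseteq \mathbb{B}_X$ be the finite subset corresponding to a generating set under the isomorphism above. Then $F$ generates $\mathbb{B}_X$ under $\varphi$ and the Boolean operations, so by the argument preceding the theorem the map $ret_F \colon X \to (2^{\mathbb{Z}})^F$ is injective, since it records enough information to decide membership in every element of the separating family $\mathbb{B}_X$ at every point of the orbit, hence at the point itself. As $F$ is finite, $\eta_F$ is continuous and bijective and satisfies $\eta_F \circ \lambda_F = \seqshift \circ \eta_F$; consequently $\eta_F \circ ret_F$ is a continuous injection of the compact space $X$ into $(2^F)^{\mathbb{Z}}$ with $(\eta_F \circ ret_F) \circ \varphi = \seqshift \circ (\eta_F \circ ret_F)$. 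Being a continuous injection from a compact space into a Hausdorff space, it is a homeomorphism onto its image, which is therefore a closed $\seqshift$-invariant set, i.e.\ a subshift over the finite alphabet $2^F$, minimal because $(X,\varphi)$ is. Since this single map carries $x$ to its image point, it is automatically a pointed topological conjugacy, settling this direction.

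For the converse, suppose $(X,\varphi,x)$ is pointed topologically conjugate to a pointed minimal subshift $(O,\seqshift,\alpha)$ over a finite alphabet $\mathfrak{a}$, with $O \subseteq \mathfrak{a}^{\mathbb{Z}}$. By Corollary \ref{corollary-returntimesinvariant} we have $Ret(X,\varphi,x)=Ret(O,\seqshift,\alpha)$, so it suffices to show the latter is finitely generated; by the equivalence recalled above, now applied to $(O,\seqshift,\alpha)$, it is enough to show that the clopen algebra $\mathbb{B}_O$ is finitely generated under $\seqshift$ and the Boolean operations. Here I would take the finite family of position-zero cylinders $U_a=\{\beta \in O : \beta(0)=a\}$ for $a \in \mathfrak{a}$. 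Each cylinder fixing finitely many coordinates is a finite intersection of shifts $\seqshift^{k}[U_a]$, since specifying the value at coordinate $k$ is a shift of specifying the value at coordinate $0$; and because $O$ is compact these cylinders generate every clopen subset of $O$ under finite unions. Hence $\{U_a : a \in \mathfrak{a}\}$ is a finite generating set for $\mathbb{B}_O$, which completes the proof.

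Both directions are short precisely because the preceding discussion does the analytic work, and the only points demanding care are bookkeeping ones. The most delicate step is verifying that the isomorphism $U \mapsto Ret_U(X,\varphi,x)$ transports finite generating sets correctly in both directions, in particular that it conjugates the $\varphi$-action on $\mathbb{B}_X$ to the $\setshift$-action on the return times algebra, so that a generating set under shift and Boolean operations on one side maps to one on the other. In the converse direction one must also be slightly careful that the clopen subsets of the subshift, taken in the subspace topology of $O$, are exactly the finite unions of restricted cylinders; this is where compactness of $O$, equivalently the fact that infinite minimal subshifts are Cantor spaces, is used.
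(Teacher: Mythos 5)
Your proposal is correct and follows essentially the same route as the paper: the subshift-to-finite-generation direction via Corollary \ref{corollary-returntimesinvariant} and the position-zero cylinders, and the converse via the map $\eta_F \circ ret_F$ built in the discussion preceding the theorem. You merely spell out in more detail two points the paper leaves implicit, namely that a continuous injection from a compact space is a homeomorphism onto its image and that cylinders generate the clopen algebra of a subshift.
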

\begin{proof} Assume that $(X,\varphi,x)$ is topologically conjugate to a pointed minimal subshift $(O,\seqshift,w)$ over some finite alphabet $\mathfrak{a}$. Then by Corollary \ref{corollary-returntimesinvariant},
\[Ret(X,\varphi,x)=Ret(O,\seqshift,w)\]
On the other hand, since the topology of $O$ is induced by the topology of $\mathfrak{a}^{\mathbb{Z}}$, the return times algebra $Ret(O,\seqshift,w)$ is generated by the finite generating set
\[\{Ret_{U_s}(O,\seqshift,w): s \in \mathfrak{a}\}\]
where $U_s$ is the basic clopen set $\{v \in \mathfrak{a}^{\mathbb{Z}}: v(0)=s\}$. For the converse direction, assume that $Ret(X,\varphi,x)$ is finitely generated with a finite generating set $F'$. Let $F$ be the preimage of $F'$ under the map $U \mapsto Ret_U(X,\varphi,x)$. Then it follows from the previous discussion that $\eta_F \circ ret_F$ is a topological conjugacy from $(X,\varphi,x)$ onto a pointed minimal subshift over the alphabet $2^{F}$.
\end{proof}

\section{Proofs of the main results}

In this section, we will prove Theorem \ref{theorem-mainresultunpointedcantor} and Theorem \ref{theorem-mainresultpointedcantor}. We begin by noting that one direction of Theorem \ref{theorem-mainresultunpointedcantor} easily follows from Corollary \ref{corollary-returntimesinvariant}.

\begin{lemma} $\ptc\ \leq_B \jumpofr$.
\end{lemma}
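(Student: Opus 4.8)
The plan is to read a complete invariant directly off Corollary \ref{corollary-returntimesinvariant}: two pointed systems are conjugate exactly when their return times algebras coincide as subsets of $\powersetz$. Since we work in $\cantorminimalspace^* = \cantorminimalspace \times \cantorspace$, every system has underlying space $\cantorspace$ and hence shares the one fixed Boolean algebra $\mathbb{B}$ of clopen sets. Fixing an enumeration $\mathbb{B} = \{U_n : n \in \mathbb{N}\}$, the return times algebra $Ret(\cantorspace,\varphi,x)$ is, under the identification $\powersetz \cong 2^{\mathbb{Z}}$, precisely the countable set $\{ret_{U_n}(\cantorspace,\varphi,x) : n \in \mathbb{N}\} \subseteq 2^{\mathbb{Z}}$. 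Thus conjugacy of $(\varphi,x)$ and $(\psi,y)$ is nothing but the equality of these two countable subsets of $2^{\mathbb{Z}}$, which is exactly a Friedman--Stanley-type condition and so should reduce to $\jumpofr$.

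Concretely, I would first fix a Borel (indeed continuous) injection $\iota : 2^{\mathbb{Z}} \to \mathbb{R}$, which exists since $2^{\mathbb{Z}}$ is homeomorphic to a compact subset of $\mathbb{R}$. Then I would define $g : \cantorminimalspace^* \to \mathbb{R}^{\mathbb{N}}$ by $g(\varphi,x) = \big(\iota(ret_{U_n}(\cantorspace,\varphi,x))\big)_{n \in \mathbb{N}}$. Using Corollary \ref{corollary-returntimesinvariant} together with the injectivity of $\iota$, verifying that $g$ is a reduction amounts to the chain
\[ Ret(\cantorspace,\varphi,x) = Ret(\cantorspace,\psi,y) \iff \{ret_{U_n}(\cantorspace,\varphi,x)\}_n = \{ret_{U_n}(\cantorspace,\psi,y)\}_n \iff g(\varphi,x) \jumpofr g(\psi,y), \]
where the first equivalence holds because the displayed sets are exactly the generating enumerations of the two return times algebras, and the second because $\iota$ is injective. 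Hence $g$ is a Borel reduction from $\ptc$ to $\jumpofr$ provided it is Borel.

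The only point requiring care, and where I expect the (mild) work to lie, is the Borelness of $g$; Proposition \ref{proposition-rettimecontinuous} gives continuity only in the $x$-variable for a fixed homeomorphism, so I need the joint statement. It suffices to show that each coordinate $(\varphi,x) \mapsto ret_{U_n}(\cantorspace,\varphi,x)(k)$ is Borel for fixed $n,k$. Under the identification of $H(\cantorspace)$ with $Aut(\mathbb{B})$ via the dual map, one has $ret_{U_n}(\cantorspace,\varphi,x)(k) = \chi_{\varphi_*^k(U_n)}(x)$, so it is enough that $\varphi \mapsto \varphi_*^k(U_n) \in \mathbb{B}$ is Borel and that $(V,x) \mapsto \chi_V(x)$ is Borel on $\mathbb{B} \times \cantorspace$. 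The former is a finite composition of evaluations of automorphisms (coordinate projections in $\mathbb{B}^{\mathbb{B}}$) together with group inversion, hence continuous; the latter is Borel because for each fixed clopen $V$ the condition $x \in V$ is clopen in $x$ and $\mathbb{B}$ is countable. Assembling these coordinatewise shows $g$ is Borel, completing the reduction.
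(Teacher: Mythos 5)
Your proof is correct and is essentially the paper's own argument: both fix an enumeration of $\mathbb{B}$, send $(\varphi,x)$ to the sequence $(ret_{U_n}(\cantorspace,\varphi,x))_n$, and invoke Corollary \ref{corollary-returntimesinvariant}, the only cosmetic difference being that you compose with an explicit continuous injection $2^{\mathbb{Z}}\hookrightarrow\mathbb{R}$ where the paper instead cites $\Delta_{2^{\mathbb{Z}}}^+\sim_B\Delta_{\mathbb{R}}^+$ via the Borel isomorphism theorem. Your Borelness verification, which the paper dismisses as ``straightforward to check,'' is a correct filling-in of that detail.
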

\begin{proof} Recall that $\Delta_{2^{\mathbb{Z}}}^+$ and $\Delta_{\mathbb{R}}^+$ are Borel bireducible since any two uncountable standard Borel spaces are Borel isomorphic. Thus it is sufficient to prove that $\ptc\ \leq_B \Delta_{2^{\mathbb{Z}}}^+$. Let $f: \cantorminimalspace^* \rightarrow (2^{\mathbb{Z}})^{\mathbb{N}}$ be the map given by
\[(\varphi,w) \mapsto (ret_{g(i)}(\cantorspace, \varphi, w))_{i \in \mathbb{N}}\]
where $g: \mathbb{N} \rightarrow \mathbb{B}$ is a fixed enumeration of the clopen subsets of $\cantorspace$. It is straightforward to check that $f$ is a Borel map. By Corollary \ref{corollary-returntimesinvariant}, $f$ is a Borel reduction from $\ptc$ to $\Delta_{2^{\mathbb{Z}}}^+$.
\end{proof}

To show that $\jumpofr\ \leq_B\ \ptc$, it is enough to injectively assign a return times algebra to each non-empty countable subset of $\mathbb{R}$. In order to construct these return times algebras, we will need a rich collection of syndetic subsets of $\mathbb{Z}$ and these will be obtained from a non-Cantor minimal system. Fix an irrational number $\gamma \in (0,1)$ and consider the irrational rotation $T_{\gamma}: [0,1) \rightarrow [0,1)$ defined by $x \mapsto x+\gamma\ (mod\ 1)$ where $[0,1)$ is identified with the quotient $\mathbb{R}/\mathbb{Z}$. It is well-known that the topological dynamical system $([0,1),T_{\gamma})$ is minimal \cite[Proposition 1.32]{Kurka03}.

Our collection of syndetic sets will be constructed in a manner similar to the construction of \textit{Sturmian words}. A Sturmian word is a 0-1 sequence of the form $ret_{[0,\gamma)}([0,1),T_{\gamma},x)$ for some $x \in [0,1)$. The main difference will be that we do not insist that the endpoint of the half open interval be the same as the rotation angle.

Let $I \subseteq (0,1)$ be a non-empty countable set and let $\A^I$ denote the Boolean algebra consisting of the subsets of $[0,1)$ generated by the collection
\[\mathbb{G}^I=\{T^k_{\gamma}[[0,\alpha)]: k \in \mathbb{Z}\ \wedge\ \alpha \in I\}\]
\begin{proposition}\label{proposition-countableatomlessalgebra} $\A^I$ is a countable atomless Boolean subalgebra of $\mathcal{P}([0,1))$ whose non-empty elements are finite unions of half open intervals and which is closed under both $T_{\gamma}$ and $T_{\gamma}^{-1}$.
\end{proposition}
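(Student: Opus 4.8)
The goal is to verify four properties of $\A^I$: that it is a genuine Boolean subalgebra of $\mathcal{P}([0,1))$, that it is countable, that it is atomless, and that it is closed under $T_\gamma$ and $T_\gamma^{-1}$; along the way I want to establish the structural description that every nonempty element is a finite union of half-open intervals, since this description will drive the other arguments. The plan is to first pin down this structural description and then read off each of the required properties from it.

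First I would describe the generators concretely. Each set $T_\gamma^k[[0,\alpha)]$ is, modulo the identification of $[0,1)$ with $\mathbb{R}/\mathbb{Z}$, either a single half-open interval $[\{k\gamma\},\{k\gamma\}+\alpha)$ or a disjoint union of two half-open intervals when the interval wraps around $0$ (i.e.\ when $\{k\gamma\}+\alpha>1$). In either case a generator is a finite union of half-open intervals whose endpoints lie in the countable set $D=\{\{k\gamma+m\alpha\}: k\in\mathbb{Z},\ m\in\{0,1\},\ \alpha\in I\}$, together with possibly the point $0$. The key closure observation is that finite unions of half-open intervals (with the left-closed, right-open convention, working mod $1$) form a Boolean subalgebra of $\mathcal{P}([0,1))$: complements, finite unions, and finite intersections of such sets are again finite unions of such half-open intervals. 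Since $\A^I$ is by definition the Boolean algebra generated by the collection $\mathbb{G}^I$, and every generator already lies in this larger algebra of finite unions of half-open intervals, it follows that every element of $\A^I$ is a finite union of half-open intervals. This simultaneously proves that $\A^I$ is a Boolean subalgebra of $\mathcal{P}([0,1))$ and gives the structural description.

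For countability, I would observe that the endpoints of every interval appearing in any element of $\A^I$ must come from the countable set $D$ above (Boolean operations on half-open intervals only produce endpoints among the endpoints of the inputs). Hence each element of $\A^I$ is determined by a finite subset of $D$ prescribing its interval endpoints, and there are only countably many finite subsets of a countable set, so $\A^I$ is countable. For atomlessness, I would take any nonempty $B\in\A^I$; by the structural description it contains some nondegenerate half-open interval $[a,b)$. It then suffices to produce an element of $\A^I$ splitting $B$ nontrivially. Here I would use that the orbit $\{\{k\gamma\}:k\in\mathbb{Z}\}$ is dense (by minimality of the irrational rotation), so I can choose $k$ with $\{k\gamma\}$ landing strictly inside $[a,b)$, and then intersect $B$ with a suitable generator $T_\gamma^k[[0,\alpha)]$ to obtain a proper nonempty subset, witnessing that $B$ is not an atom. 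Finally, closure under $T_\gamma^{\pm 1}$ is the cleanest point: $T_\gamma$ is an automorphism of $\mathcal{P}([0,1))$, it permutes the generating collection $\mathbb{G}^I$ (since $T_\gamma(T_\gamma^k[[0,\alpha)])=T_\gamma^{k+1}[[0,\alpha)]$ and likewise for $T_\gamma^{-1}$), and an automorphism that maps a generating set into the algebra maps the whole generated algebra into itself; applying this to both $T_\gamma$ and $T_\gamma^{-1}$ gives closure.

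The main obstacle I anticipate is the atomlessness argument, and more specifically making the density-of-the-orbit step actually cut the given interval $[a,b)$ into two nonempty pieces both lying in $\A^I$. The subtlety is that I must split using a set that is genuinely in $\A^I$, not an arbitrary subinterval; the natural candidate is a generator $T_\gamma^k[[0,\alpha)]$ for some $\alpha\in I$ and some $k$ chosen via density so that exactly one endpoint of this generator falls strictly inside $[a,b)$. Since $I$ is a fixed countable set I have a genuine $\alpha\in I$ to work with, and the freedom in choosing $k$ (using density of $\{\{k\gamma\}\}$ and of the translated endpoints $\{\{k\gamma+\alpha\}\}$) should suffice to guarantee that the intersection and its relative complement within $B$ are both nonempty. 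The remaining verifications are routine case checks about half-open intervals wrapping around $0$, which I would not grind through in detail.
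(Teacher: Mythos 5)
Your proof is correct, and everything except the atomlessness step matches the paper's argument: both proofs rest on the observation that finite unions of half-open intervals form a Boolean subalgebra of $\mathcal{P}([0,1))$ containing $\mathbb{G}^I$, that a Boolean algebra generated by a countable collection is countable, and that $T_{\gamma}^{\pm 1}$ permute the generators and hence preserve the generated algebra. Where you genuinely diverge is atomlessness. The paper argues by contradiction: if $A$ were an atom, density of orbits gives some $k\neq 0$ with $A\cap T_{\gamma}^k[A]\neq\emptyset$, atomicity forces $A= T_{\gamma}^k[A]$, and then $A$ would be a proper closed-up invariant set for the minimal rotation $T_{k\gamma}$ (using that $k\gamma$ is again irrational), a contradiction. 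You instead split any non-empty $B$ directly: $B$ contains a nondegenerate interval $[a,b)$, density of the orbit $\{\{k\gamma\}:k\in\mathbb{Z}\}$ of $0$ lets you place the left endpoint of a generator $T_{\gamma}^k[[0,\alpha)]$ strictly inside $(a,b)$, and then $B\cap T_{\gamma}^k[[0,\alpha)]$ is a non-empty proper subset of $B$ lying in $\A^I$ (points of $[a,\{k\gamma\})$ just to the left of $\{k\gamma\}$ witness properness, since $\alpha<1$). Your route is more hands-on and elementary -- it needs only density of a single orbit and the concrete shape of the generators, not the minimality of the auxiliary rotations $T_{k\gamma}$ -- at the cost of a small case analysis about where the generator's arc sits relative to $[a,b)$; the paper's route is softer and avoids tracking endpoints, but leans on the extra fact that every $T_{k\gamma}$, $k\neq 0$, is minimal. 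The one place you hedge (``should suffice'') does in fact go through: you only need the \emph{left} endpoint $\{k\gamma\}$ to land in the open interval $(a,b)$, which density guarantees, and then both $B\cap T_{\gamma}^k[[0,\alpha)]$ and its complement in $B$ are automatically non-empty.
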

\begin{proof} Observe that complements and intersections of finite unions of half open intervals in $[0,1)$ are also finite unions of half open intervals. Since $\mathbb{G}^I$ is a countable subcollection of $\mathcal{P}([0,1))$ consisting of finite unions of half open intervals which is closed under both $T_{\gamma}$ and $T_{\gamma}^{-1}$, the same is true of the Boolean algebra $\A^I$ generated by $\mathbb{G}^I$. To see that $\A^I$ is atomless, assume to the contrary that there exists an atom $\emptyset \empty \neq A \subsetneq [0,1)$ in $\A^I$. Recall that the $T_{\gamma}$-orbit of every point is dense by the minimality of $([0,1),T_{\gamma})$. It follows that there exists $k \in \mathbb{Z} \setminus \{0\}$ such that $A \cap T_{\gamma}^k[A] \neq \emptyset$. Note that $k \gamma$ is also irrational and hence $([0,1),T_{k \gamma})$ is also minimal. Since $A$ is an atom in $\A^I$, we have that $A \cap T_{\gamma}^k[A] = A$. But then $\overline{\{T_{\gamma}^{ki}(x): i \in \mathbb{Z}\}} \subseteq \overline{A}$ for any $x \in A$ and hence $\{T_{\gamma}^{ki}(x): i \in \mathbb{Z}\}$ is not dense in $[0,1)$ for any $x \in A$, which contradicts the minimality of $([0,1),T_{k \gamma})$.
\end{proof}

Let $\A_I$ be the image of $\A^I$ under the Boolean algebra homomorphism
\[U \mapsto Ret_U([0,1),T_{\gamma},0)\]
It follows from Proposition \ref{proposition-countableatomlessalgebra} that $\A_I$ is a countable atomless subalgebra of $\powersetz$ which is closed under both $\setshift$ and $\setshift^{-1}$. By the minimality of $([0,1),T_{\gamma})$, since each $U \in \A^I$ contains an open interval, the set $Ret_U([0,1),T_{\gamma},0)$ is a syndetic subset of $\mathbb{Z}$ for every $U \in \A^I$. Hence $\A_I$ is a return times algebra.

Recall that the \textit{asymptotic density} of a subset $A$ of $\mathbb{Z}$ is defined to be the limit
\[\displaystyle \Dens(A):=\lim_{n \rightarrow \infty} \frac{|A \cap [-n,n]|}{2n+1}\]
whenever it exists. Identifying $\powersetz$ with $2^{\mathbb{Z}}$, we can similarly define the asymptotic density of an element $\alpha \in 2^\mathbb{Z}$ to be the limit
\[\displaystyle \Freq(\alpha):= \lim_{n \rightarrow \infty} \frac{|\{k \in \mathbb{Z}: \alpha(k)=1\} \cap [-n,n]|}{2n+1}\]
whenever it exists. We will next show that the set of asymptotic densities of elements of $\A_I$ is a topological conjugacy invariant for the collection of Cantor minimal systems of the form $(\St(\A_I),\setshift_*)$. We will need the following well-known equidistribution theorem.

\begin{fact}\cite{EinsiedlerWard11}\label{theorem-equidistribution} Let $\gamma \in [0,1)$ be an irrational number. Then for any $x \in [0,1)$ the sequence $(T_{\gamma}^i(x))_{i \in \mathbb{N}}$ is equidistributed in $[0,1)$ in the sense that for any $a,b \in [0,1)$ with $0 \leq a \leq b < 1$ we have that
\[ \displaystyle \lim_{n \rightarrow \infty} \frac{|\{j: 0 \leq j < n, x_j \in [a,b]\}|}{n}=b-a \]
\end{fact}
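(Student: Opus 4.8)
The plan is to deduce the equidistribution statement from the convergence of ergodic averages for continuous functions, which in turn reduces to a one-line computation on exponentials together with a density argument. Throughout I identify $[0,1)$ with $\mathbb{R}/\mathbb{Z}$, write $e(t)=e^{2\pi i t}$, and use that $T_{\gamma}^j(x)=x+j\gamma \pmod 1$.

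First I would establish the key special case for characters: for every nonzero integer $m$,
\[ \frac{1}{n}\sum_{j=0}^{n-1} e\big(m\, T_{\gamma}^j(x)\big)=e(mx)\cdot\frac{1}{n}\sum_{j=0}^{n-1} e(mj\gamma)\longrightarrow 0 \quad\text{as } n\to\infty. \]
Indeed, since $\gamma$ is irrational and $m\neq 0$ we have $e(m\gamma)\neq 1$, so the inner sum is geometric and equals $(e(mn\gamma)-1)/(e(m\gamma)-1)$, whose modulus is at most $2/|e(m\gamma)-1|$, a constant independent of $n$; dividing by $n$ gives the claim. For $m=0$ the average is identically $1=\int_0^1 e(0\cdot t)\,dt$. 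By linearity this yields, for every trigonometric polynomial $P$,
\[ \frac{1}{n}\sum_{j=0}^{n-1} P\big(T_{\gamma}^j(x)\big)\longrightarrow \int_0^1 P(t)\,dt. \]

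Next I would upgrade this to an arbitrary continuous $f:\mathbb{R}/\mathbb{Z}\to\mathbb{R}$. By Stone--Weierstrass the trigonometric polynomials are uniformly dense in the continuous functions on $\mathbb{R}/\mathbb{Z}$; given $\epsilon>0$ choose a trigonometric polynomial $P$ with $\|f-P\|_\infty<\epsilon$. Then the averages of $f$ and of $P$ differ by at most $\epsilon$ uniformly in $n$, and $|\int f-\int P|\leq\epsilon$, so a standard three-$\epsilon$ estimate gives
\[ \frac{1}{n}\sum_{j=0}^{n-1} f\big(T_{\gamma}^j(x)\big)\longrightarrow \int_0^1 f(t)\,dt. \]

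Finally I would pass from continuous functions to the indicator $\chi_{[a,b]}$, and this is the step I expect to be the main obstacle, since $\chi_{[a,b]}$ is discontinuous and the orbit may accumulate at the endpoints $a,b$. The remedy is a sandwiching argument: for each $\delta>0$ pick continuous functions $f_\delta,g_\delta$ on $\mathbb{R}/\mathbb{Z}$ with $f_\delta\leq \chi_{[a,b]}\leq g_\delta$ and $\int_0^1 (g_\delta-f_\delta)<\delta$ (for instance piecewise-linear trapezoidal functions transitioning over intervals of length $\delta$ near $a$ and $b$). Applying the continuous-function convergence to $f_\delta$ and $g_\delta$ and using $\int_0^1 \chi_{[a,b]}=b-a$ gives
\[ b-a-\delta\leq \liminf_{n}\frac{1}{n}\sum_{j=0}^{n-1}\chi_{[a,b]}\big(T_{\gamma}^j(x)\big)\leq \limsup_{n}\frac{1}{n}\sum_{j=0}^{n-1}\chi_{[a,b]}\big(T_{\gamma}^j(x)\big)\leq b-a+\delta. \]
Letting $\delta\to 0$ forces the limit to exist and equal $b-a$, which is exactly the asserted frequency of visits of $(T_{\gamma}^i(x))_{i\in\mathbb{N}}$ to $[a,b]$.
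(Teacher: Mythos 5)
The paper does not prove this statement at all: it is stated as a \emph{Fact} with a citation to Einsiedler--Ward, so there is no in-paper argument to compare yours against. Your proof is the classical Weyl equidistribution argument and it is correct: the geometric-sum bound $\bigl|\sum_{j=0}^{n-1} e(mj\gamma)\bigr|\leq 2/|e(m\gamma)-1|$ is valid precisely because $\gamma$ irrational and $m\neq 0$ force $e(m\gamma)\neq 1$; Stone--Weierstrass (or Fej\'er) upgrades the character case to all continuous functions on $\mathbb{R}/\mathbb{Z}$ via the three-$\epsilon$ estimate; and the trapezoidal sandwich handles the discontinuity of $\chi_{[a,b]}$, including the degenerate case $a=b$ where the limit is correctly forced to $0$. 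This is essentially the proof given in the cited reference, so nothing is missing.
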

By applying Theorem \ref{theorem-equidistribution} to the irrational rotations $T_{\gamma}$ and $T_{1 - \gamma}$, it is easily checked that
\[\Dens(Ret_{U}([0,1),T_{\gamma},0))=\mu(U)\]
for every $U \in \A^I$ where $\mu$ is the usual Lebesgue measure on $[0,1)$. Having shown that elements of $\A_I$ have well-defined asymptotic densities, we define the density set of $\A_I$ to be the collection
\begin{align*}
\Dens(\A_I)&:=\{\Dens(A): A \in \A_I\}\\
&=\{\Dens(Ret_{U}([0,1),T_{\gamma},0)): U \in \A^I\}\\
&=\{\mu(U): U \in \A^I\}
\end{align*}

In order to prove that $\Dens(\A_I)$ is an invariant of the topological conjugacy class of $(\St(\A_I),\setshift_*)$, we will need the following technical lemma.

\begin{lemma}\label{lemma-frequencydoesntchange} Let $U \subseteq [0,1)$ be a finite union of half open intervals. Then for every $\alpha$ in the subshift of $2^{\mathbb{Z}}$ generated by $ret_{U}([0,1),T_{\gamma},0)$ we have that $\Freq(\alpha)=\mu(U)$.
\end{lemma}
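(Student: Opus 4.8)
The plan is to reduce the assertion about the entire subshift to the single-point frequency computation that precedes the lemma. First I would observe that the calculation giving $\Dens(Ret_U([0,1),T_\gamma,0))=\mu(U)$ used no special feature of the basepoint: applying Fact~\ref{theorem-equidistribution} to the forward orbit under $T_\gamma$ and to the backward orbit under $T_\gamma^{-1}=T_{1-\gamma}$, and noting that an irrational orbit meets each of the finitely many interval endpoints of $U$ at most once, the same argument yields $\Freq(ret_U([0,1),T_\gamma,x))=\mu(U)$ for every $x\in[0,1)$, not merely for $x=0$.

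Next I would make the elements of the subshift explicit. By Proposition~\ref{proposition-rettimecontinuous}, the map $r_U(x):=ret_U([0,1),T_\gamma,x)$ satisfies $r_U\circ T_\gamma=\seqshift\circ r_U$, so that $\seqshift^n(r_U(0))=r_U(T_\gamma^n(0))$; hence the $\seqshift$-orbit of the generating sequence is exactly $\{r_U(T_\gamma^n(0)):n\in\mathbb{Z}\}$. An arbitrary $\alpha$ in the subshift is therefore a coordinatewise limit $\alpha=\lim_j \seqshift^{n_j}(r_U(0))=\lim_j r_U(x_j)$ with $x_j=T_\gamma^{n_j}(0)$. Since the circle $\mathbb{R}/\mathbb{Z}$ is compact, after passing to a subsequence I may assume $x_j\to x$ for some $x$.

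The key step is a coordinatewise comparison of $\alpha$ with $r_U(x)$. For each fixed $k$ we have $T_\gamma^k(x_j)\to T_\gamma^k(x)$, and $\alpha(k)=\lim_j \chi_U(T_\gamma^k(x_j))$. The indicator $\chi_U$ is locally constant, hence continuous, at every point outside the boundary $\partial U$, which is just the finite set of interval endpoints; so whenever $T_\gamma^k(x)\notin\partial U$ we get $\alpha(k)=\chi_U(T_\gamma^k(x))=r_U(x)(k)$. Because $\gamma$ is irrational, the orbit $\{T_\gamma^k(x):k\in\mathbb{Z}\}$ hits each endpoint at most once, so there are only finitely many $k$ with $T_\gamma^k(x)\in\partial U$. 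Thus $\alpha$ agrees with $r_U(x)$ at all but finitely many coordinates, and since $\Freq$ is a Ces\`aro average with denominator $2n+1\to\infty$ it is insensitive to finite modifications; therefore $\Freq(\alpha)=\Freq(r_U(x))=\mu(U)$.

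The hard part is conceptual rather than computational: $\Freq$ is not continuous on $2^{\mathbb{Z}}$, so mere membership of $\alpha$ in the subshift cannot transport the frequency value through a naive limiting argument, and the discontinuity of $\chi_U$ at the interval endpoints means that $x_j\to x$ does not force $r_U(x_j)\to r_U(x)$. The decisive point that overcomes this is that irrationality of $\gamma$ confines every possible discrepancy between $\alpha$ and $r_U(x)$ to finitely many coordinates, where the frequency is blind.
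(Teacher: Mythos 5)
Your proposal is correct and follows essentially the same route as the paper's proof: pass to a convergent subsequence $x_j\to x$ on the circle, observe that $\alpha$ can disagree with $ret_U([0,1),T_{\gamma},x)$ only at the finitely many coordinates $k$ with $T_{\gamma}^k(x)\in\partial U$ (your local-constancy argument for $\chi_U$ off $\partial U$ is the same point the paper makes via an explicit $\delta_k$ and the isometry property of $T_{\gamma}$), and conclude since asymptotic density is unaffected by finitely many changes. No gaps.
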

\begin{proof} Let $\alpha$ be in the subshift generated by $ret_{U}([0,1),T_{\gamma},0)$. It is sufficient to find some $v \in [0,1)$ such that $\alpha=ret_U([0,1),T_{\gamma},v)$ since we know that
\[\Freq(ret_{U}([0,1),T_{\gamma},v))=\mu(U)\]
by the previous discussions. As $\alpha$ is in the subshift generated by $ret_{U}([0,1),T_{\gamma},0)$, there exists a sequence $(n_k)_{k \in \mathbb{N}}$ of integers such that
\[\alpha = \displaystyle \lim_{k \rightarrow \infty} \seqshift^{n_k}(ret_{U}([0,1),T_{\gamma},0))\]
Notice that
\[ \alpha = \displaystyle \lim_{k \rightarrow \infty} \seqshift^{n_k}(ret_{U}([0,1),T_{\gamma},0)) = \displaystyle \lim_{k \rightarrow \infty} (ret_{U}([0,1),T_{\gamma},T_{\gamma}^{n_k}(0)))\]
Hence, our target point $v \in [0,1)$ \textit{should} be the limit of the sequence $T^{n_k}_{\gamma}(0)$ in $[0,1)$. However, there is no reason that this sequence should converge. Nevertheless, the sequential compactness of $[0,1)$ implies that there exists some subsequence $(n_{k_i})_{i \in \mathbb{N}}$ such that $T_{\gamma}^{n_{k_i}}(0)$ is convergent, say with the limit $v=\displaystyle \lim_{i \rightarrow \infty} T_{\gamma}^{n_{k_i}}(0)$. We would like to move the limit operation inside so that
\[\displaystyle \lim_{i \rightarrow \infty} ret_U([0,1),T_{\gamma},T^{n_{k_i}}_{\gamma}(0))=ret_U([0,1),T_{\gamma},\displaystyle \lim_{i \rightarrow \infty} T^{n_{k_i}}_{\gamma}(0))=ret_U([0,1),T_{\gamma},v)\]
If the function $ret_U([0,1),T_{\gamma},\cdot)$ were continuous, then this step would be justified. However, Proposition \ref{proposition-rettimecontinuous} may fail if $U$ is not clopen and $ret_U(X,\varphi,\cdot)$ need not be continuous in general. Even though $ret_U([0,1),T_{\gamma},v)$ is not necessarily $\alpha$, we will next prove that these sequences can differ at only finitely many indices.

Let $B_v$ be the set of indices $\{j \in \mathbb{Z}: T_{\gamma}^j(v) \in \partial U \}$ where $\partial U$ denotes the boundary of $U$. Note that $\partial U$ is finite and hence $B_v$ is also finite. Otherwise, $v$ would be a periodic point of $([0,1),T_{\gamma})$, which contradicts the minimality of $([0,1),T_{\gamma})$. We will show that
\[ret_U([0,1),T_{\gamma},v) \upharpoonright (\mathbb{Z}-B_v) = \displaystyle \lim_{i \rightarrow \infty} (ret_U([0,1),T_{\gamma},T_{\gamma}^{n_{k_i}}(0)) \upharpoonright (\mathbb{Z}-B_v))\]
where the limit is taken in the topological space $2^{\mathbb{Z}-B_v}$. For each $k \geq 1$, choose $\delta_k > 0$ such that
\[\delta_k < min\{d(T_{\gamma}^j(v),y): y \in \partial U \wedge\ -k \leq j \leq k\ \wedge j \notin B_v\}\]
where $d$ is the usual metric on $\mathbb{R}/\mathbb{Z} \cong [0,1)$. Since $T_{\gamma}$ is an isometry with respect to $d$, it follows from the choice of $\delta_k$ that for any $v'$ in the open ball $B_d(v,\delta_k)$ and for any $-k \leq j \leq k$ with $j \notin B_v$, we have that
\[T_{\gamma}^j(v) \in Int(U) \Leftrightarrow T_{\gamma}^j(v') \in Int(U)\]
where $Int(U)$ is the interior of $U$. In other words, for any $v' \in B_d(v,\delta_k)$ and for any  $-k \leq j \leq k$ with $j \notin B_v$, we have that
\[ret_U([0,1),T_{\gamma},v)(j)=ret_U([0,1),T_{\gamma}, v')(j)\]
Since $v=\displaystyle \lim_{i \rightarrow \infty} T_{\gamma}^{n_{k_i}}(0)$, we know that for any $k \geq 1$, there exists $m \geq 0$ such that for all $i \geq m$ we have $|v-T_{\gamma}^{n_{k_i}}(0)| < \delta_k$. It follows that
\begin{align*}
ret_U([0,1),T_{\gamma},v) \upharpoonright (\mathbb{Z}-B_v) &= \displaystyle \lim_{i \rightarrow \infty} (ret_U([0,1),T_{\gamma},T_{\gamma}^{n_{k_i}}(0)) \upharpoonright (\mathbb{Z}-B_v))\\
&= (\displaystyle \lim_{i \rightarrow \infty} ret_U([0,1),T_{\gamma},T_{\gamma}^{n_{k_i}}(0)) \upharpoonright (\mathbb{Z}-B_v)\\
&= \alpha \upharpoonright (\mathbb{Z}-B_v)
\end{align*}
This implies that $\alpha$ and $ret_U([0,1),T_{\gamma},v)$ have the same asymptotic density $\mu(U)$.
\end{proof}

\begin{corollary} \label{corollary-frequencyinvariant} For every non-empty countable $I,J \subseteq [0,1)$, if $(\St(\A_I), \setshift_*)$ and $(\St(\A_J), \setshift_*)$ are topologically conjugate, then $\Dens(\A_I)=\Dens(\A_J)$.
\end{corollary}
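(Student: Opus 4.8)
The plan is to realize $\Dens(\A_I)$ as the set of asymptotic densities of the return time sequences of the system $(\St(\A_I), \setshift_*)$, and then to observe that this set is manifestly invariant under topological conjugacy. The crux is to show that, for this particular system, the density of the return time sequence to a clopen set does not depend on the chosen point, which is exactly what Lemma \ref{lemma-frequencydoesntchange} was designed to provide.

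First I would fix a clopen set $V \subseteq \St(\A_I)$ and let $A \in \A_I$ be the unique element with $V=\{w \in \St(\A_I): A \in w\}$. By the computation in the proof of Lemma \ref{lemma-returntimestopointed}, the sequence $ret_V(\St(\A_I), \setshift_*, x_{\A_I})$ is the characteristic function of $A$; writing $A=Ret_U([0,1),T_{\gamma},0)$ for the corresponding $U \in \A^I$, this is precisely $ret_U([0,1),T_{\gamma},0)$. By Proposition \ref{proposition-rettimecontinuous}, the map $r_V: w \mapsto ret_V(\St(\A_I), \setshift_*, w)$ is continuous and satisfies $r_V \circ \setshift_* = \seqshift \circ r_V$, so its image is a subshift of $2^{\mathbb{Z}}$; since $x_{\A_I}$ has dense orbit by minimality, a standard compactness argument shows that this image is exactly the subshift generated by $ret_U([0,1),T_{\gamma},0)$. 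Lemma \ref{lemma-frequencydoesntchange} then guarantees that every element of this subshift has asymptotic density $\mu(U)$. In other words, for every $w \in \St(\A_I)$ the limit $\Freq(ret_V(\St(\A_I), \setshift_*, w))$ exists and equals $\mu(U)=\Dens(A)$, independently of $w$. (This amounts to the unique ergodicity of $(\St(\A_I), \setshift_*)$, with the unique invariant measure assigning $\Dens(A)$ to $V$, though I would not need this reformulation.)

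Since $V \mapsto A$ is a bijection between the clopen subsets of $\St(\A_I)$ and the elements of $\A_I$, it follows that for any fixed $w \in \St(\A_I)$,
\[\{\Freq(ret_V(\St(\A_I), \setshift_*, w)): V \subseteq \St(\A_I) \text{ clopen}\}=\{\Dens(A): A \in \A_I\}=\Dens(\A_I).\]
Thus $\Dens(\A_I)$ has been expressed purely in terms of the dynamics of $(\St(\A_I), \setshift_*)$. Finally I would transport this description across a conjugacy. Suppose $\pi: \St(\A_I) \rightarrow \St(\A_J)$ is a topological conjugacy. Then $\pi$ carries clopen sets bijectively onto clopen sets, and the intertwining relation $\setshift_*^k \circ \pi = \pi \circ \setshift_*^k$ yields $ret_W(\St(\A_J), \setshift_*, \pi(w))=ret_{\pi^{-1}[W]}(\St(\A_I), \setshift_*, w)$ for every clopen $W \subseteq \St(\A_J)$ and every $w$. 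Applying the displayed identity to $(\St(\A_J), \setshift_*)$ at the point $\pi(x_{\A_I})$ and rewriting each return time sequence through $\pi$ collapses $\Dens(\A_J)$ onto the corresponding set for $(\St(\A_I), \setshift_*)$, giving $\Dens(\A_I)=\Dens(\A_J)$.

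The only genuinely nontrivial step is the point-independence of the density, established via Lemma \ref{lemma-frequencydoesntchange}; the remaining ingredients, namely the Stone-dual identification of clopen sets of $\St(\A_I)$ with elements of $\A_I$ and the equivariance of the return time maps, are routine. The main obstacle, therefore, is not in the transport across the conjugacy but in verifying that the density of a return sequence is the same at every point, which is precisely the delicate discontinuity issue that the preceding lemma resolves.
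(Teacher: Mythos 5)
Your argument is correct and is essentially the paper's own proof: both hinge on Proposition \ref{proposition-rettimecontinuous} to identify the image of the return-time map with the minimal subshift generated by $ret_U([0,1),T_{\gamma},0)$, and then invoke Lemma \ref{lemma-frequencydoesntchange} to conclude that every point yields the same density $\mu(U)$. The only difference is organizational: you first establish point-independence of the density within each system and then transport the resulting invariant across $\pi$, whereas the paper pushes the clopen set forward to $\St(\A_J)$ and compares the two base points $\pi(x_{\A_I})$ and $x_{\A_J}$ there --- a cosmetic reordering of the same argument.
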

\begin{proof} Assume that $(\St(\A_I), \setshift)$ and $(\St(\A_J), \setshift)$ are topologically conjugate via the homeomorphism $\pi: \St(\A_I) \rightarrow \St(\A_J)$. Let $r \in \Dens(\A_I)$. Since
\[Ret(\St(\A_I),\setshift_*,x_{\A_I}))=\A_I\]
there exists a clopen subset $W$ of $\St(\A_I)$ such that
\[ r=\Dens(Ret_W(\St(\A_I),\setshift_*,x_{\A_I}))=\Dens(Ret_{\pi[W]}(\St(\A_J),\setshift_*,\pi(x_{\A_I})))
\]
It follows from Proposition \ref{proposition-rettimecontinuous} that the image of $\St(\A_J)$ under the map
\[w \mapsto ret_{\pi[W]}(\St(\A_J),\setshift_*,w)\]
is a subshift. This subshift is minimal since it is the factor of a minimal dynamical system. Moreover, we know that
\[ret_W(\St(\A_I),\setshift_*,x_{\A_I})=ret_U([0,1),T_{\gamma},0)\] for some $U \in \A^I$; and every sequence in the subshift generated by the sequence $ret_U([0,1),T_{\gamma},0)$ has the same asymptotic density by Lemma \ref{lemma-frequencydoesntchange}. In particular,
\[r=\Dens(Ret_{\pi[W]}(\St(\A_J),\setshift_*,\pi(x_{\A_I})))=\Dens(Ret_{\pi[W]}(\St(\A_J),\setshift_*,x_{\A_J}))\]
and hence $r \in \Dens(\A_J)$. Carrying out this argument symmetrically, we obtain that
\[\Dens(\A_I)=\Dens(\A_J)\]
\end{proof}

Recall that $\Delta_{\mathcal{I}}^+ \sim_B \jumpofr$ for any uncountable Borel subset $\mathcal{I}$ of $\mathbb{R}$. Thus it is sufficient to show that $\Delta_{\mathcal{I}}^+$ is Borel reducible to both $\tc$ and $\ptc$ for some appropriately chosen Borel subset $\mathcal{I} \subseteq (0,1)$ of size continuum.

Observe that taking unions, intersections, and complements introduce no new boundary points as we generate $\A^I$ from $\mathbb{G}^I$. Hence, the set of boundary points of elements of $\A^I$ is exactly the set of boundary points of elements of $\mathbb{G}^I$ which is contained in the $\mathbb{Q}$-span of $\{1,\gamma\} \cup I$. Thus the density set $\Dens(\A_I)$ is contained in the $\mathbb{Q}$-span of $\{1,\gamma\} \cup I$ since
\[\Dens(\A_I)=\{\mu(U): U \in \A^I\}\]

\begin{lemma} \label{lemma-qlinearlyindependent} There exist an irrational number $\gamma \in (0,1)$ and a Borel subset $\mathcal{I} \subseteq (0,1)$ of size continuum such that $\mathcal{I} \cap \{1,\gamma\} = \emptyset$ and $\mathcal{I} \cup \{1,\gamma\}$ is $\mathbb{Q}$-linearly independent.
\end{lemma}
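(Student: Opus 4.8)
The plan is to reduce the statement to the classical fact that there is a perfect subset of $(0,1)$ which is $\mathbb{Q}$-linearly independent together with $1$ (a Mycielski-type independence result), and then simply read off $\gamma$ and $\mathcal{I}$ from such a set. Since a perfect set is Borel and of size continuum, this yields all the required properties at once.

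First I would construct, by a Cantor scheme / fusion argument, a perfect set $P \subseteq (0,1)$ whose elements are $\mathbb{Q}$-linearly independent together with $1$. Concretely, I would build a family of nondegenerate closed intervals $(I_s)_{s \in 2^{<\mathbb{N}}}$ with $I_s \subseteq (0,1)$, with $I_{s0}, I_{s1} \subseteq I_s$ disjoint, and with $\mathrm{diam}(I_s) \to 0$ along each branch, and set $P = \bigcap_n \bigcup_{|s|=n} I_s$, so that each $x \in 2^{\mathbb{N}}$ determines a unique point $p_x \in \bigcap_n I_{x \upharpoonright n}$ and $P = \{p_x : x \in 2^{\mathbb{N}}\}$ is perfect. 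The requirements to be enforced during the recursion are indexed by the countably many \emph{tasks}: each task is a tuple of pairwise incomparable nodes $s_1, \dots, s_n \in 2^{<\mathbb{N}}$ together with rationals $q_0, q_1, \dots, q_n$ with $(q_1, \dots, q_n) \neq 0$. At the stage handling such a task, I would shrink the intervals $I_{s_1}, \dots, I_{s_n}$ so that the affine form $(t_1, \dots, t_n) \mapsto q_0 + \sum_{i} q_i t_i$ is bounded away from $0$ on the product $I_{s_1} \times \dots \times I_{s_n}$; this is always possible because the zero set of a nonconstant affine form with rational coefficients is a hyperplane and hence has empty interior. Since intervals only shrink at later stages, the strict inequality persists for all descendants.

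With $P$ in hand, I would verify the conclusion. Any nontrivial $\mathbb{Q}$-linear relation among $1$ and finitely many distinct points $p_{x^{(1)}}, \dots, p_{x^{(n)}}$ of $P$ would, for a level $m$ large enough that the $s_i := x^{(i)} \upharpoonright m$ are pairwise distinct (hence incomparable), contradict the inequality secured by the task $(s_1, \dots, s_n; q_0, \dots, q_n)$; this shows that $\{1\} \cup P$ is $\mathbb{Q}$-linearly independent. I would then fix any $\gamma \in P$ and set $\mathcal{I} = P \setminus \{\gamma\}$. The set $\mathcal{I}$ is Borel (a perfect set minus a point) and of size continuum, with $\mathcal{I} \subseteq (0,1)$ and $\gamma \in (0,1)$; moreover $1 \notin (0,1) \supseteq \mathcal{I}$ and $\gamma \notin \mathcal{I}$, so $\mathcal{I} \cap \{1,\gamma\} = \emptyset$; and $\gamma$ is irrational precisely because $\{1,\gamma\}$ is $\mathbb{Q}$-linearly independent. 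Finally $\mathcal{I} \cup \{1,\gamma\} = \{1\} \cup P$ is $\mathbb{Q}$-linearly independent, as required.

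The main obstacle is the fusion construction of $P$: one must organize the countably many relation-tasks into a single recursion and check that shrinking finitely many intervals at each stage simultaneously preserves nondegeneracy, disjointness of siblings, and all previously secured inequalities. The geometric core — that a nonzero rational affine form vanishes only on a nowhere dense hyperplane and can therefore be avoided on a product of sufficiently small closed subintervals — is elementary, so the difficulty is bookkeeping rather than conceptual.
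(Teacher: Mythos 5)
Your proposal is correct, but it reaches the key object --- a perfect subset $P$ of $(0,1)$ such that $P\cup\{1\}$ is $\mathbb{Q}$-linearly independent, from which one reads off $\gamma$ and $\mathcal{I}=P\setminus\{\gamma\}$ exactly as you do --- by a genuinely different construction than the paper. You run a Mycielski-style fusion/Cantor-scheme argument: enumerate the countably many rational affine relations as tasks and, at each stage, shrink the relevant product of intervals away from the corresponding hyperplane, which has empty interior; this is a standard category-type argument whose only real cost is the bookkeeping you acknowledge (every task must be scheduled after all of its nodes are defined, and one should arrange that each relation is tested at a level where the branches have already separated, which your verification paragraph handles correctly). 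The paper instead gives a completely explicit construction: it takes an almost disjoint family $\{A_\alpha\}_{\alpha\in 2^{\mathbb{N}}}$ of subsets of $\mathbb{N}$ coming from the branches of the labelled binary tree and sets $r_\alpha=\sum_i \chi_{A_\alpha}(i)\,2^{-(i+1)^2}$, so that the lacunary binary expansions of any finite subfamily force $\mathbb{Q}$-linear independence (together with $1$, since rationals have eventually periodic expansions); the resulting set is a continuous injective image of $2^{\mathbb{N}}$, hence compact and of size continuum. Your approach is more flexible (the same fusion scheme avoids any countable family of meager relations), while the paper's is shorter to verify as Borel and avoids organizing a recursion; both are complete proofs of the lemma.
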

\begin{proof}
Fix a labeling of the vertices of the full binary tree of height $\omega$ by $\mathbb{N}$. For any infinite path $\alpha \in 2^{\mathbb{N}}$, let $A_{\alpha} \subseteq \mathbb{N}$ be the set of labels of the vertices that $\alpha$ passes through. Observe that intersection of any two such sets is finite. Consequently, if we\footnote{The author learned this trick from the MathOverflow post http://mathoverflow.net/q/32780 (version: 2010-07-21) by Sir Timothy Gowers, which is posted under the username ``gowers"} let $r_{\alpha}=\sum_{i=0}^{\infty} \chi_{A_{\alpha}}(i) \cdot 2^{-(i+1)^2}$ for each $\alpha \in 2^{\mathbb{N}}$, then the set $\{r_{\alpha}: \alpha \in 2^{\mathbb{N}}\}$ is a $\mathbb{Q}$-linearly independent subset of $(0,1)$ of size continuum, where $\chi_{A_{\alpha}}$ denotes the characteristic function of $A_{\alpha}$. Let $\gamma \in \{r_{\alpha}: \alpha \in 2^{\mathbb{N}}\}$ and set $\mathcal{I}:=\{r_{\alpha}: \alpha \in 2^{\mathbb{N}}\} \backslash \{\gamma\}$. Then $\mathcal{I}$ and $\gamma$ satisfy our requirements.
\end{proof}

We are now ready to prove the main theorem of this section.

\begin{theorem} \label{theorem-mainreductionconstruction} $\jumpofr$ is Borel reducible to both $\tc$ and $\ptc$.
\end{theorem}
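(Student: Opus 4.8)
The plan is to exhibit a single Borel map that simultaneously witnesses both reductions by sending each nonempty countable subset of $\mathcal{I}$ to the ultrafilter dynamical system associated with the return times algebra $\A_I$ constructed above. More precisely, I would first fix the irrational $\gamma$ and the Borel set $\mathcal{I} \subseteq (0,1)$ provided by Lemma \ref{lemma-qlinearlyindependent}, and observe that since $\Delta_{\mathcal{I}}^+ \sim_B \jumpofr$ for any uncountable Borel $\mathcal{I} \subseteq \mathbb{R}$, it suffices to construct a Borel reduction from $\Delta_{\mathcal{I}}^+$ to $\tc$ and to $\ptc$. Given a countable subset $I = \{x_i : i \in \mathbb{N}\} \subseteq \mathcal{I}$ coded by a sequence in $\mathcal{I}^{\mathbb{N}}$, I would form the Boolean algebra $\A^I \subseteq \mathcal{P}([0,1))$ generated by $\mathbb{G}^I$, pass to its image $\A_I \subseteq \powersetz$ under the map $U \mapsto Ret_U([0,1),T_\gamma,0)$, and then map $I$ to the ultrafilter dynamical system $(\St(\A_I),\setshift_*)$ (together with its distinguished point $x_{\A_I}$ for the pointed version). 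By Proposition \ref{proposition-countableatomlessalgebra} and the remarks following it, $\A_I$ is a return times algebra, so by Lemma \ref{lemma-returntimestopointed} this target is a genuine (pointed) Cantor minimal system. One then has to check this assignment can be carried out in a Borel fashion into the standard Borel spaces $\cantorminimalspace$ and $\cantorminimalspace^*$; this is routine bookkeeping, tracing through the explicit Stone-duality construction in the proof of Lemma \ref{lemma-returntimestopointed}.

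\textbf{The key equivalence.} The heart of the argument is proving that the map is a reduction, i.e. that for countable $I, J \subseteq \mathcal{I}$,
\[
\{x_i : i\} = \{y_j : j\} \iff (\St(\A_I),\setshift_*) \tc (\St(\A_J),\setshift_*),
\]
and likewise for the pointed relation. The forward direction is easy: if $I = J$ as sets then $\A^I = \A^J$, hence $\A_I = \A_J$, and the two systems are literally the same (and identically pointed), so they are topologically conjugate in both senses. The real content is the reverse implication, and this is where I would lean on the density-set invariant. By Corollary \ref{corollary-frequencyinvariant}, a topological conjugacy between $(\St(\A_I),\setshift_*)$ and $(\St(\A_J),\setshift_*)$ forces $\Dens(\A_I) = \Dens(\A_J)$. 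Since $\ptc$ refines $\tc$, the same conclusion holds for a pointed conjugacy, so both relations are handled at once.

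\textbf{Recovering $I$ from the density set.} The main obstacle, and the step I expect to require the most care, is converting the equality $\Dens(\A_I) = \Dens(\A_J)$ back into the set equality $I = J$. Here the $\mathbb{Q}$-linear independence from Lemma \ref{lemma-qlinearlyindependent} is essential. I would argue as follows. Each generator $[0,\alpha)$ with $\alpha \in I$ has Lebesgue measure $\alpha$, so $\alpha \in \Dens(\A_I)$ for every $\alpha \in I$; thus $I \subseteq \Dens(\A_I)$ and symmetrically $J \subseteq \Dens(\A_J)$. Conversely, by the observation recorded just before Lemma \ref{lemma-qlinearlyindependent}, $\Dens(\A_I)$ is contained in the $\mathbb{Q}$-span of $\{1,\gamma\} \cup I$. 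The crucial point is to identify, within this $\mathbb{Q}$-span, exactly which elements land in $(0,1) \cap \mathcal{I}$, and to show these are precisely the members of $I$. Concretely, suppose $\alpha \in \mathcal{I}$ lies in $\Dens(\A_I)$; then $\alpha = q_0 + q_1\gamma + \sum_{i} q_i x_i$ is a rational combination of elements of $\{1,\gamma\} \cup I$. Since $\{\alpha\} \cup \{1,\gamma\} \cup I \subseteq \mathcal{I} \cup \{1,\gamma\}$ is $\mathbb{Q}$-linearly independent whenever $\alpha \notin I$, the only way such a relation can hold is if $\alpha$ itself appears among the $x_i$, i.e. $\alpha \in I$. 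Therefore
\[
I = \mathcal{I} \cap \Dens(\A_I),
\]
and so $\Dens(\A_I) = \Dens(\A_J)$ yields $I = \mathcal{I} \cap \Dens(\A_I) = \mathcal{I} \cap \Dens(\A_J) = J$. This closes the reverse direction for both relations and completes the proof that the constructed map is a Borel reduction from $\Delta_{\mathcal{I}}^+$ to both $\tc$ and $\ptc$.
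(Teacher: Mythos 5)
Your proposal is correct and follows essentially the same route as the paper: the same assignment $I \mapsto (\St(\A_I),\setshift_*,x_{\A_I})$, the same use of Corollary \ref{corollary-frequencyinvariant} for the hard direction, and the same combination of $I \subseteq \Dens(\A_I) \subseteq \mathbb{Q}\text{-span}(\{1,\gamma\}\cup I)$ with the $\mathbb{Q}$-linear independence from Lemma \ref{lemma-qlinearlyindependent}. The only cosmetic difference is that you phrase the key step as the direct recovery $I = \mathcal{I} \cap \Dens(\A_I)$, whereas the paper argues the contrapositive ($S \neq S'$ implies $\Dens(\A_S) \neq \Dens(\A_{S'})$).
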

\begin{proof} Fix some irrational number $\gamma \in (0,1)$ and a Borel subset $\mathcal{I} \subseteq (0,1)$ as in Lemma \ref{lemma-qlinearlyindependent}. Given any $\textbf{S} \in \mathcal{I}^{\mathbb{N}}$, let $f(\textbf{S})$ and $g(\textbf{S})$ be elements of $\cantorminimalspace$ and $\cantorminimalspace^*$ which code $(\St(\A_S),\setshift_*)$ and $(\St(\A_S),\setshift_*,x_{\A_S})$ respectively, where
\[S=\{\textbf{S}_i \in \mathcal{I}: i \in \mathbb{N}\}\]
and $\A_S$ is computed using the irrational rotation by $\gamma$. We will show that $f$ and $g$ are Borel reductions from $\Delta_{\mathcal{I}}^+$ to $\tc$ and $\ptc$ respectively.

We skip the tedious details of checking that $f$ and $g$ are indeed Borel maps from $\mathcal{I}^{\mathbb{N}}$ to $\cantorminimalspace$ and $\cantorminimalspace^*$. To see that $f$ and $g$ are reductions from $\Delta_{\mathcal{I}}^+$ to $\tc$ and $\ptc$ respectively, pick $\textbf{S}, \textbf{S}' \in \mathcal{I}^{\mathbb{N}}$ such that $\textbf{S}$ is $\Delta_{\mathcal{I}}^+$-equivalent to $\textbf{S}'$. Then clearly
\[ Ret(\St(\A_S),\setshift_*,x_{\A_S})=\A_S=\A_{S'}=Ret(\St(\A_{S'}),\setshift_*,x_{\A_{S'}})\]
It follows from Corollary \ref{corollary-returntimesinvariant} that $g(\textbf{S}) \ptc g(\textbf{S}')$ and hence $f(\textbf{S}) \tc f(\textbf{S}')$. Now pick $\textbf{S}, \textbf{S}' \in \mathcal{I}^{\mathbb{N}}$ such that $\textbf{S}$ is not $\Delta_{\mathcal{I}}^+$-equivalent to $\textbf{S}'$. Recall that $\Dens(\A_S)$ and $\Dens(\A_{S'})$ are contained in the $\mathbb{Q}$-spans of $\{1,\gamma\} \cup S$ and $\{1,\gamma\} \cup S'$ respectively. Moreover, we know that $S \subseteq \Dens(\A_S)$ and $S' \subseteq \Dens(\A_{S'})$. It follows from $\mathbb{Q}$-linear independence of $\mathcal{I} \cup \{1,\gamma\}$ that $\Dens(\A_S) \neq \Dens(\A_{S'})$. By Corollary \ref{corollary-frequencyinvariant}, we have that $f(\A_S) \ntc f(\A_{S'})$ and hence $g(\A_S) \nptc g(\A_{S'})$.
\end{proof}

This proves Theorem \ref{theorem-mainresultunpointedcantor} and completes the proof of Theorem \ref{theorem-mainresultpointedcantor}.

\section{From unordered Bratteli diagrams to properly ordered Bratteli diagrams}

In this section, as an application of Theorem \ref{theorem-mainresultpointedcantor}, we will prove a non-uniformity theorem regarding assigning proper orderings to simple Bratteli diagrams.

Assume that we are given an unordered Bratteli diagram $(V,E)$ such that the incidence matrices have only positive entries at each level. Then we can easily attach a partial order $\bratleq$ to $(V,E)$ as follows so that $(V,E,\bratleq)$ is a properly ordered Bratteli diagram \cite[Section 1]{Skau91}. Fix a linear order $\leq^*$ on $E$ and a linear order $\leq_n$ on $V_n$ for each $n \in \mathbb{N}$. Given $e,e' \in E_{n+1}$ with $r(e)=r(e')$, define $e \bratleq e'$ if and only if either $s(e) <_n s(e')$ or, $s(e)=s(e')$ and $e <^* e'$. It is not difficult to see that the sources of the minimal (respectively, maximal) edges are the same at every level and hence there is a unique minimal (respectively, maximal) path.

Therefore, given a simple unordered Bratteli diagram $B$, we can explicitly attach a partial order to the edges and obtain a properly ordered Bratteli diagram $B^*$, possibly after telescoping $B$. Carrying out this procedure on the relevant standard Borel spaces, one can prove that there exists a Borel map $f: \mathcal{SBD} \rightarrow \mathcal{POBD}$ such that $f(B)\equivalencesbd B$ as unordered Bratteli diagrams for every $B \in \mathcal{SBD}$. On the other hand, this map is not ``uniform" in the sense that $B_1 \equivalencesbd B_2$ does not necessarily imply $f(B_1) \equivalencepobd f(B_2)$.

One can ask whether or not such a uniform map exists. If we do not insist that $f$ be well-behaved, then we can use the axiom of choice to choose a representative from each $\sim$-class and map each $\sim$-class to the properly ordered Bratteli diagram obtained from the corresponding representative.

We will prove that there does not exist such a uniform Borel map. We first need to understand the complexity of $\sim$-equivalence of simple Bratteli diagrams. Hjorth \cite{Hjorth02} has proved that the isomorphism relation on the standard Borel space of countable torsion-free abelian groups is not Borel. Ellis showed that this relation is Borel reducible to the isomorphism relation for simple dimension groups \cite[Proposition 6.2]{Ellis10} and it essentially follows from the work of Effros, Handelman, and Shen \cite{EffrosHandelmanShen80} that the isomorphism relation for simple dimension groups is Borel reducible to the equivalence relation $\equivalencesbd$ on the space of simple Bratteli diagrams. For a detailed discussion of the latter construction, we refer the reader to \cite[Chapter 3]{Effros81}.

On the one hand, $\equivalencesbd$ is not Borel since isomorphism of countable torsion-free abelian groups is Borel reducible to it. On the other hand, $\equivalencepobd$ is Borel since the map which takes each properly ordered Bratteli diagram to the return times algebra of the corresponding Bratteli-Vershik dynamical system is a Borel reduction from $\equivalencepobd$ to $\Delta_{2^{\mathbb{Z}}}^+$. These observations immediately imply Theorem \ref{intromaintheorem2}.

\begin{proof}[Proof of Theorem \ref{intromaintheorem2}] Assume towards a contradiction that there exists a Borel map $f$ such that for all $B, B' \in \mathcal{SBD}$,
\begin{itemize}
\item[-] $f(B) \sim B$ as unordered Bratteli diagrams, and
\item[-] $B \sim B'$ implies that $f(B) \approx f(B')$.
\end{itemize}
Then $f$ is a Borel reduction from $\equivalencesbd$ to $\equivalencepobd$. This implies that $\equivalencesbd$ is Borel, which is a contradiction.
\end{proof}

\section{Concluding Remarks}
It is not difficult to prove that $\cong_{ptc}$ and $\equivalencepobd$ are Borel bireducible. For a detailed discussion of these reductions, we refer the reader to the author's dissertation \cite[Chapter 7]{Kaya16}, which closely follows the constructions given in \cite{HermanPutnamSkau92} and \cite{Durand10}.

Having determined the Borel complexity of $\equivalencepobd$, one can ask how the Borel complexity changes when we restrict our attention to various subclasses of properly ordered Bratteli diagrams. For example, what is the Borel complexity of equivalence of finite rank properly ordered Bratteli diagrams?

A Bratteli diagram $(V,E)$ is said to be of \textit{finite rank} if there exists $n \in \mathbb{N}$ such that $|V_k| \leq n$ for all $k \in \mathbb{N}$. Downarowicz and Maass \cite{DownarowiczMaass08} proved that the Bratteli-Vershik dynamical system of a properly ordered Bratteli diagram of finite rank is topologically conjugate to either an odometer, i.e. an inverse limit of a sequence of finite periodic systems, or a minimal subshift over a finite alphabet. Since topological conjugacy of odometers is smooth \cite[Theorem 7.6]{BuescuStewart95} and topological conjugacy of minimal subshifts over finite alphabets is a countable Borel equivalence relation \cite[Lemma 9]{Clemens09}, equivalence of properly ordered Bratteli diagrams of finite rank is an essentially countable Borel equivalence relation and hence is Borel reducible to $E_{\infty}$.

Theorem \ref{theorem-finitegeneration} implies that the return times algebra of a Bratteli-Vershik dynamical system arising from a finite rank properly ordered Bratteli diagram is finitely generated, unless the system is topologically conjugate to an odometer. Since the set $\mathcal{I}$ in the proof of Theorem \ref{theorem-mainreductionconstruction} was chosen to be $\mathbb{Q}$-linearly independent, the return times algebra of the pointed Cantor minimal system $(\St(\A_S),\setshift_*,x_{\A_S})$ constructed in that proof is not finitely generated unless the corresponding countable non-empty subset $S$ of $\mathcal{I}$ is finite. Hence, the properly ordered Bratteli diagrams corresponding to the pointed Cantor minimal system $(\St(\A_S),\setshift_*,x_{\A_S})$ are of infinite rank for any countably infinite $S \subseteq \mathcal{I}$. Consequently, equivalence of properly ordered Bratteli diagrams of infinite rank is Borel bireducible with $\jumpofr$. Combining these observations with the fact that $E_{\infty} <_B \jumpofr$, we obtain that equivalence of finite rank properly ordered Bratteli diagrams is strictly less complex than equivalence of infinite rank properly ordered Bratteli diagrams.

Analyzing the construction of Bratteli-Vershik representations of pointed Toeplitz subshifts with Toeplitz points described in \cite[Theorem 8]{GjerdeJohansen00} and the Borel reduction in Thomas' proof \cite{Thomas13}, one can prove that equivalence of finite rank Bratelli diagrams is not smooth. As far as the author knows, this is currently the best known lower bound for the Borel complexity of this relation.

\begin{question} What is the Borel complexity of equivalence of properly ordered Bratteli diagrams of finite rank? More generally, what is the Borel complexity of topological conjugacy of pointed minimal subshifts over finite alphabets?
\end{question}

Even though we have provided a lower bound for the Borel complexity of the topological conjugacy relation on Cantor minimal systems, we do not know any non-trivial upper bounds. The techniques used in this paper are designed to analyze pointed topological conjugacy and it is not clear to us whether or not they can be used to find any upper bounds for unpointed topological conjugacy. Thus we pose the following question.

\begin{question} What is the Borel complexity of the topological conjugacy relation on Cantor minimal systems? In particular, is this relation even Borel?
\end{question}

\textbf{Acknowledgements.} This work is largely based on the author's PhD dissertation \cite{Kaya16} under the supervision of Simon Thomas. The author is grateful to Simon Thomas and Gregory Cherlin for their invaluable guidance and many fruitful discussions. This research was partially supported by Simon Thomas and Gregory Cherlin through the NSF grants DMS-1101597 and DMS-1362974.

\bibliography{references}{}

\providecommand{\bysame}{\leavevmode\hbox to3em{\hrulefill}\thinspace}
\providecommand{\MR}{\relax\ifhmode\unskip\space\fi MR }
\providecommand{\MRhref}[2]{%
  \href{http://www.ams.org/mathscinet-getitem?mr=#1}{#2}
}
\providecommand{\href}[2]{#2}
\begin{thebibliography}{10}

\bibitem{BuescuStewart95}
Jorge Buescu and Ian Stewart, \emph{Liapunov stability and adding machines},
  Ergodic Theory Dynam. Systems \textbf{15} (1995), no.~2, 271--290.

\bibitem{Clemens09}
John~D. Clemens, \emph{Isomorphism of subshifts is a universal countable
  {B}orel equivalence relation}, Israel J. Math. \textbf{170} (2009), 113--123.

\bibitem{DownarowiczMaass08}
Tomasz Downarowicz and Alejandro Maass, \emph{Finite-rank {B}ratteli-{V}ershik
  diagrams are expansive}, Ergodic Theory Dynam. Systems \textbf{28} (2008),
  no.~3, 739--747.

\bibitem{Durand10}
Fabien Durand, \emph{Combinatorics on {B}ratteli diagrams and dynamical
  systems}, Combinatorics, automata and number theory, Encyclopedia Math.
  Appl., vol. 135, Cambridge Univ. Press, Cambridge, 2010, pp.~324--372.

\bibitem{Effros81}
Edward~G. Effros, \emph{Dimensions and {$C\sp{\ast} $}-algebras}, CBMS Regional
  Conference Series in Mathematics, vol.~46, Conference Board of the
  Mathematical Sciences, Washington, D.C., 1981.

\bibitem{EffrosHandelmanShen80}
Edward~G. Effros, David~E. Handelman, and Chao~Liang Shen, \emph{Dimension
  groups and their affine representations}, Amer. J. Math. \textbf{102} (1980),
  no.~2, 385--407.

\bibitem{EinsiedlerWard11}
Manfred Einsiedler and Thomas Ward, \emph{Ergodic theory with a view towards
  number theory}, Graduate Texts in Mathematics, vol. 259, Springer-Verlag
  London, Ltd., London, 2011.

\bibitem{Ellis10}
Paul Ellis, \emph{The classification problem for finite rank dimension groups},
  ProQuest LLC, Ann Arbor, MI, 2010, Thesis (Ph.D.)--Rutgers The State
  University of New Jersey - New Brunswick.

\bibitem{FriedmanStanley89}
Harvey Friedman and Lee Stanley, \emph{A {B}orel reducibility theory for
  classes of countable structures}, J. Symbolic Logic \textbf{54} (1989),
  no.~3, 894--914.

\bibitem{Gao09}
Su~Gao, \emph{Invariant descriptive set theory}, Pure and Applied Mathematics
  (Boca Raton), vol. 293, CRC Press, Boca Raton, FL, 2009.

\bibitem{GaoJacksonSeward15}
Su~Gao, Steve Jackson, and Brandon Seward, \emph{Group colorings and
  {B}ernoulli subflows}, Mem. Amer. Math. Soc. \textbf{241} (2015), no.~1141,
  236.

\bibitem{GjerdeJohansen00}
Richard Gjerde and {\O}rjan Johansen, \emph{Bratteli-{V}ershik models for
  {C}antor minimal systems: applications to {T}oeplitz flows}, Ergodic Theory
  Dynam. Systems \textbf{20} (2000), no.~6, 1687--1710.

\bibitem{HermanPutnamSkau92}
Richard~H. Herman, Ian~F. Putnam, and Christian~F. Skau, \emph{Ordered
  {B}ratteli diagrams, dimension groups and topological dynamics}, Internat. J.
  Math. \textbf{3} (1992), no.~6, 827--864.

\bibitem{Hjorth02}
Greg Hjorth, \emph{The isomorphism relation on countable torsion free abelian
  groups}, Fund. Math. \textbf{175} (2002), no.~3, 241--257.

\bibitem{Kanovei08}
Vladimir Kanovei, \emph{Borel equivalence relations}, University Lecture
  Series, vol.~44, American Mathematical Society, Providence, RI, 2008,
  Structure and classification.

\bibitem{Kaya16}
Burak Kaya, \emph{Cantor minimal systems from a descriptive perspective}, 2016,
  Thesis (Ph.D.)--Rutgers The State University of New Jersey - New Brunswick.

\bibitem{Koppelberg89}
Sabine Koppelberg, \emph{Handbook of {B}oolean algebras. {V}ol. 1},
  North-Holland Publishing Co., Amsterdam, 1989, Edited by J. Donald Monk and
  Robert Bonnet.

\bibitem{Kurka03}
P.~K{\r{u}}rka, \emph{Topological and symbolic dynamics}, Cours
  sp\'{e}cialis\'{e}s, vol.~11, Soci\'{e}t\'{e} Math\'{e}matique de France,
  2003.

\bibitem{SabokTsankov15}
Marcin Sabok and Todor Tsankov, \emph{{On the complexity of topological
  conjugacy of Toeplitz subshifts}}, Israel J. Math., To appear.

\bibitem{Skau91}
Christian Skau, \emph{Minimal dynamical systems, ordered {B}ratteli diagrams
  and associated {$C\sp *$}-crossed products}, Current topics in operator
  algebras ({N}ara, 1990), World Sci. Publ., River Edge, NJ, 1991,
  pp.~264--280.

\bibitem{Thomas13}
Simon Thomas, \emph{Topological full groups of minimal subshifts and
  just-infinite groups}, Proceedings of the 12th {A}sian {L}ogic {C}onference,
  World Sci. Publ., Hackensack, NJ, 2013, pp.~298--313.

\end{thebibliography}
\bibliographystyle{amsplain}
\end{document}